\newtheorem{theorem}{Theorem}[section]
\newtheorem{prop}[theorem]{Proposition}
\newtheorem{remark}[theorem]{Remark}
\renewcommand{\a}{\alpha}
\newcommand{\G}{\Gamma}
\renewcommand{\th}{\theta}
\renewcommand{\l}{\lambda}
\newcommand{\m}{\mu}
\newcommand{\x}{\xi}
\newcommand{\s}{\sigma}
\newcommand{\Si}{\Sigma}
\newcommand{\f}{\phi}
\newcommand{\h}{\chi}
\renewcommand{\o}{\omega}
\renewcommand{\O}{\Omega}
\newcommand{\R}{{\mathbb R}}
\newcommand{\kb}{{\mathbf k}}
\newcommand{\lb}{{\mathbf l}}
\newcommand{\nb}{{\mathbf n}}
\newcommand{\xb}{{\mathbf x}}
\newcommand{\yb}{{\mathbf y}}
\newcommand{\Kb}{{\mathbf K}}
\newcommand{\Lb}{{\mathbf L}}
\newcommand{\Pb}{{\mathbf P}}
\newcommand{\ssF}{\mathfrak s}
\newcommand{\SF}{\mathfrak S}
\newcommand{\Kc}{{\mathcal K}}
\newcommand{\Lc}{{\mathcal L}}
\newcommand{\Mcc}{{\mathcal M}}
\newcommand{\Rc}{{\mathcal R}}
\newcommand{\Sc}{{\mathcal S}}
\newcommand{\Bn}{\mathbf{B^\circ}}
\newcommand{\pO}{\partial\Omega}
\newcommand{\Kp}{\pmb{\Kc}}
\newtheorem{proposition}[theorem]{Proposition}
\newtheorem{corollary}[theorem]{Corollary}
\newtheorem*{theorem*}{Theorem}
\newcommand{\p}{\partial}
\newcommand{\eqnref}[1]{(\ref {#1})}
\newcommand{\Rbb}{\mathbb{R}}
\newcommand{\Kcal}{\mathcal{K}}
\newcommand{\Scal}{\mathcal{S}}
\def\Bn{{\bf n}}
\def\Bx{{\bf x}}
\def\By{{\bf y}}
\newcommand{\Ge}{\epsilon}
\newcommand{\Gl}{\lambda}
\newcommand{\GD}{\Delta}
\newcommand{\GO}{\Omega}
\newcommand{\beq}{\begin{equation}}
\newcommand{\eeq}{\end{equation}}
\numberwithin{equation}{section}
\numberwithin{figure}{section}
\begin{document}

\title[Neumann-Poincare operator]{Eigenvalues of the Neumann-Poincare operator in dimension 3: Weyl's law and geometry}
\author{Yoshihisa Miyanishi}
\address{Center for Mathematical Modeling and Data Science, Osaka University, Japan}
\email{miyanishi@sigmath.es.osaka-u.ac.jp}

\author{Grigori Rozenblum }

\address{ Chalmers University of Technology and The University of Gothenburg (Sweden); St.Petersburg State University, Dept. Math. Physics (St.Petersburg, Russia)}

\email{grigori@chalmers.se}

\subjclass[2010]{47A75 (primary), 58J50 (secondary)}
\keywords{Neumann-Poincare operator, Eigenvalues, Weyl's law, Pseudo-differential operators, Willmore energy}
\dedicatory{To Volodya Maz'ya, an outstanding mathematician}
\begin{abstract}
We consider the asymptotic properties of the eigenvalues of the Neumann-Poincare (NP) operator in three dimensions. The region $\O\subset\R^3$ is bounded by a compact surface $\G=\partial \Omega$, with certain smoothness conditions imposed. The NP operator  $\Kc_{\G}$, called often `the direct value of the double layer potential', acting in $L^2(\G)$, is defined by
\begin{equation*}
    \Kc_{\G}[\psi](\xb):=\frac{1}{4\pi}\int_\G\frac{\langle \yb-\xb,\nb(\yb)\rangle}{|\xb-\yb|^3}\psi(\yb)dS_{\yb},
\end{equation*}
where $dS_{\yb}$ is the surface element and $\nb(\yb)$ is the outer unit normal vector on $\G$. The first-named author proved in \cite{Miyanishi:Weyl} that the singular numbers $s_j(\Kc_\G)$ of $\Kc_{\G}$ and the ordered moduli of its eigenvalues $\lambda_j(\Kc_\G)$ satisfy the Weyl law
\begin{equation*}
    s_j(\Kc(\G))\sim|\lambda_j(\Kc_\G)|\sim \left\{ \frac{3W(\G)-2\pi\h(\G)}{128\pi}\right\}^{\frac12}j^{-\frac12},
\end{equation*}under the condition that  $\G$ belongs to the class $C^{2, \a}$ with $\a>0,$
where $W(\G)$ and $\h(\G)$ denote, respectively, the Willmore energy and the Euler characteristic of the boundary surface $\G$. Although the NP operator is not self-adjoint (and therefore no general relations between eigenvalues and singular number exist), the ordered moduli of the eigenvalues of $\Kc_\G$ satisfy the same asymptotic relation.

Our main purpose here is to investigate the asymptotic behavior of positive and negative eigenvalues separately under the condition of infinite smoothness of the boundary $\G$. These formulas are used, in particular, to obtain certain answers to the long-standing problem of the existence or finiteness of negative eigenvalues of $\Kc_\G$. A more sophisticated estimation  allows us to give a natural  extension of the Weyl's law for the case of a smooth boundary.
\end{abstract}
\thanks{The second-named author is supported by grant RFBR  No 17-01-00668}
\maketitle



\section{Introduction and Results}
\par\hspace{5mm}

\subsection{Introduction}The Neumann--Poincar\'e (abbreviated by NP) operator is the boundary integral operator which appears naturally when solving classical boundary value problems using layer potentials. Its study (for the Laplace operator) goes back to C. Neumann \cite{Neumann-87} and H. Poincar\'{e} \cite{Poincare-AM-87} as the name of the operator suggests (this names combination was first used  by T. Carleman in his Thesis \cite{Carleman} and became conventional afterwards). If the boundary of the domain on which the NP operator is defined, is $C^{1, \alpha}$ smooth, then the NP operator
is compact. Thus the second kind Fredholm integral equation, which appears when solving Dirichlet or Neumann problems, is subject to the Fredholm index theory.

The study of   spectral properties of the NP operator was initiated by S. Zaremba, \cite{Zaremba}.
 Later on, it was proved in \cite{KPS} that the NP operator, not self-adjoint, generally, in $L^2,$ can be however realized as a self-adjoint operator in the $H^{-1/2}$- Sobolev space, provided   a new inner product  is introduced there, and therefore the NP spectrum is real,  may consist of a continuous spectrum and a discrete spectrum (and possibly  limit points of the discrete spectrum). If the domain is only Lipschitz, in particular, has corners, the corresponding NP operator does, in fact, possess a continuous spectrum (as well as eigenvalues).  If the domain has a smoother boundary, say, $C^{1, \alpha}$, then the spectrum consists of eigenvalues (converging to $0$ if there are infinitely many of them) and the point zero.

 It turns out that the properties of eigenvalues in the  two-dimensional and higher dimensional cases are quite different.  In the two-dimensional case, the rate of decay of these eigenvalues depends in a crucial way on the smoothness of the boundary, and their exact asymptotic behavior is presently known only for a few examples when these eigenvalues can be calculated explicitly. We refer to \cite{AKM2, Miyanishi:2015aa} for the progress on the convergence rate of NP eigenvalues in two dimensions.

As for the three-dimensional case, a general reasoning implies that the NP operator, being \emph{in the smooth case} a pseudodifferential operator of order $-1$, should have eigenvalues $\l_j$ having the order $j^{-\frac12}$. In fact, the first-named author  \cite{Miyanishi:2015aa} has proved the asymptotic formula for the \emph{nonincreasingly ordered } moduli of eigenvalues (see \cite{Miyanishi:Weyl} and see also Theorem \ref{main}).
However, the exact asymptotic formula for the positive and negative eigenvalues ordered separately was previously never known. One of complications stems from the circumstance that the NP operator is not self-adjoint in $L^2$.

With all this in mind, the purpose of this paper is to prove the Weyl law for the asymptotic behavior of positive and negative NP eigenvalues, separately, in three dimensions.
The reasoning is based upon the pseudodifferential representation of the NP operator and the formulas for the eigenvalue asymptotics for pseudodifferential  operators. These formulas are well known for self-adjoint operators on smooth manifolds, see \cite{BS}, and this consideration takes care of the smooth case, since it turns out that the spectral problem for the non-self-adjoint NP operator can be reduced to the one for a self-adjoint pseudodifferential operator. As it concerns the case of a boundary $\G$ of finite smoothness, exactly, of the class $C^{2,\a}$, the eigenvalue formulas of \cite{BS}, established there for smooth manifolds, cannot be applied directly and require certain additional perturbation reasoning. We present such reasoning, thus supporting the calculations in \cite{Miyanishi:Weyl}.

The coefficients in the asymptotic formulas are expressed in geometrical terms, involving the principal curvatures and the Gauss curvature of the surface. The most esthetic is the result for a convex domain, where the coefficient in the Weyl formula is expressed via the Euler characteristic and the Willmore energy of the surface.

 Especially for the case of a non-convex domain, some interesting problems arise as well. One of important by-products of our eigenvalue calculations is a certain progress in the long-standing question on the existence of negative eigenvalues of the NP operator in three dimensions. (There is a kind of inconsistency in the literature in fixing the sign in the expression for the  fundamental solution for the Laplacian and, consequently, in the integral kernel of the NP operator, therefore the question on positive and negative eigenvalues arises alternatively. The agreement we adopt follows, say, \cite{Ah2} or \cite{KPS}, so the fundamental solution is $-(4\pi |x|)^{-1}$. Besides it, we employ the second fundamental form as the inner products of the {\it outer normal derivative} and the second partial derivatives of a regular parametrization of a surface. So the principal curvatures of a sphere, for instance, are negative). The existence of at least one negative  eigenvalue was established in \cite{Ah2}, by means of explicit computations, for the case of  an oblate spheroid. Recently, the existence, again, of a negative eigenvalue was established in \cite{JiKang}  for at least one in a pair of two surfaces related by inversion, provided there exists at least one point of non-convexity. Besides it, the NP operator on  the standard torus has infinitely many negative eigenvalues as well as infinitely many positive ones, which was established, again, by means of explicit eigenvalue calculations, see \cite{AKJiKKM}.

  Our results on this latter problem are the following.  Suppose  that the surface $\G=\pO$ is infinitely smooth. First, there \emph{always} exist infinitely many positive eigenvalues. Further on, if there exists a point where at least one of principal curvatures is positive (so, the body $\O$ is not convex near this point), there exist infinitely many negative eigenvalues of the NP operator. On the other hand, if the surface is uniformly convex in the sense that the principal curvatures are everywhere negative (due to smoothness, this implies that they are separated from zero) then there may exist only finitely many negative eigenvalues. We note again that the results on the asymptotics of eigenvalues and on the negative eigenvalues are obtained under the condition of infinite smoothness of the surface while the singular numbers asymptotics is proved for surfaces of the class $C^{2,\a}$.

\subsection{Main results}
To state the results in a more precise manner, let $\Omega$ be a $C^{1, \alpha}$ bounded region in $\Rbb^3$ (it is allowed that the boundary $\G=\pO$ consists of several connected components.) The NP operator $\Kcal_{\G} : L^2{(\G)} \rightarrow L^2{(\G)}$ is defined by
\beq\label{definition of NP operators}
\quad \Kcal_{\G}[\psi](\Bx) := \frac{1}{4\pi} \int_{\G} \frac{\langle \By-\Bx, \Bn(\By) \rangle}{|\Bx-\By|^3} \psi(\By)\; dS_{\By}
\eeq
where $dS_{\By}$ is the surface element and $\Bn(\By)$ is the outer normal unit vector to $\G$ at the boundary point $\yb$. This operator is known to be non-selfadjoint in $L^2(\G)$, unless each component of $\G$ is a sphere. However, $\Kc_{\G}$ is symmetrizable, in other words, there exists  a self-adjoint operator in the Sobolev space $H^{-\frac12}(\G)$ equipped with the norm defined as
 \begin{equation}\label{-1/2 norm}
 \|u\|_{-\frac12}=\langle -\Sc_{\G} u,u\rangle^{\frac12},
 \end{equation}
where $\Sc_{\G}$ is the single layer operator on $\G$,
\begin{equation*}
    \Sc_{\G}[\f](\xb)=-\frac{1}{4\pi}\int_{\G}|\xb-\yb|^{-1}\f(\yb)dS_{\yb}, \, \xb\in\G.
\end{equation*}

As explained above, we know already that $\Kcal_{\G}$ is a compact operator on $L^2(\G)$ and the set of its eigenvalues consists of at most countable set of real numbers, with $0$ the only possible limit point. It is also known that the eigenvalues of the NP operator lie in the interval $(-1/2, 1/2]$ and  $1/2$ is the eigenvalue corresponding to the constant eigenfunction. We denote the set of NP eigenvalues counting multiplicities by
\beq\label{spec}
\sigma_p(\Kcal_{\G})
=\{\; \pm\l_0^{\pm}(\G)\ge\pm \l_1^{\pm}(\G)\ge\dots \},
\eeq
where $\pm\l_j^{\pm}(\pO)$ are positive, resp., moduli of negative, eigenvalues of $\Kc_{\G};$ it is \emph{a priori} possible that the set of negative eigenvalues is finite or even void. Note that, by the symmetrization, there are no associated eigenfunctions and therefore the geometric multiplicity of each eigenvalue coincides with its algebraic multiplicity. The union of the sets of positive and of moduli of negative eigenvalues, again, numbered in the non-increasing order, form the sequence
\beq\label{ModeSpec}
\ssF_p(\Kcal_{\G})
=\{\;  \frac12=\l_0(\G)\ge \l_1(\G)\ge\dots \}.
\eeq
This sequence coincides with the non-increasingly ordered set of the singular numbers ($s$-numbers) of the operator $\Kc_{\G}$ \emph{considered as an operator in the Sobolev  space $H^{-1/2}(\G)$ with the above norm} (we note here that when changing the norm in a Hilbert space to an equivalent one, the $s$-numbers of an operator may change, unlike the eigenvalues). The set of $s$-numbers of the operator $\Kc_{\G}$ in $L^2(\G)$ is, generally, different from $\ssF_p(\Kcal_{\G})$ and it is denoted by
\beq \label{Snumbers}
\pmb{\m}(\Kc_{\G})=\{\;  \m_0(\G)\ge \m_1(\G)\ge\dots \}.
\eeq
As long as this does not cause confusion, we will omit the specification  $\G$ in the notations of operators, spaces, eigenvalues and singular values.

The counting functions for the sequences \eqref{spec}, \eqref{ModeSpec}, \eqref{Snumbers} are denoted  by $n_{\pm}(\l)=\#\{j: \pm \l_{j}^{\pm}>\l\}$, resp.,  $n(\l)$, $m(\l)$. Of course, $n(\l)=n_+(\l)+n_-(\l)$.

\begin{theorem}\label{mainPM} Let $\O$ be a bounded domain with $C^\infty$ boundary. Then
\begin{equation}\label{AsPM}
    \l^{\pm}_j\sim\ A_{\pm}(\G)^{\frac12}j^{-\frac12}, j\to\infty,
\end{equation}
where
\begin{equation}\label{APM}
    A_{\pm}(\G)=\frac{1}{128\pi^2}\int\limits_{\G}{dS_{\xb}}\int_0^{2\pi}[(k_1(\xb) \cos^2\th +k_2(\xb)\sin^2\th)_{\mp}]^2d\th,
\end{equation}
while
\begin{equation}\label{APMPM}
    A_+(\G)+A_-(\G)=A(\G),
\end{equation}
the latter given in \eqref{Coeff}.
Here $k_1(\xb), k_2(\xb)$ are the principal curvatures of the surface $\G$ at the point $\xb$ with the direction of the normal vector being chosen to be the exterior one; $dS_{\xb}$ is the surface element of $\G$ at $\xb$.
If the coefficient $A_+(\G)$ or $A_-(\G)$ turns out to be zero, formula \eqref{AsPM} should be understood as $\l^{\pm}_j=o(j^{-\frac12})$.
\end{theorem}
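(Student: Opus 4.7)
The plan is to reduce Theorem~\ref{mainPM} to the Birman--Solomyak Weyl asymptotics \cite{BS} for self-adjoint classical pseudodifferential operators of negative order on a smooth closed manifold, applied to a symmetrised version of $\Kc_\G$. The first step is to recognise $\Kc_\G$ as a classical $\Psi$DO of order $-1$ on the $C^\infty$ surface $\G$ and compute its principal symbol. Fix $\xb_0\in\G$ and choose normal coordinates $(z_1,z_2)$ on the tangent plane aligned with the principal directions, so that $\G$ is locally the graph $x_3=\tfrac12(k_1z_1^2+k_2z_2^2)+O(|z|^3)$ with $k_i=k_i(\xb_0)$ in the exterior-normal sign convention. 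A Taylor expansion of $\langle\yb-\xb_0,\nb(\yb)\rangle$ and of $|\xb_0-\yb|^3$ shows that the leading part of the kernel in \eqref{definition of NP operators} equals $-(8\pi)^{-1}(k_1z_1^2+k_2z_2^2)/|z|^3$, the remainder yielding symbols of order $\leq-2$. Splitting $k_1z_1^2+k_2z_2^2=\tfrac{k_1+k_2}{2}|z|^2+\tfrac{k_1-k_2}{2}(z_1^2-z_2^2)$ and invoking the two-dimensional Fourier transforms $\widehat{|z|^{-1}}(\xi)=2\pi|\xi|^{-1}$ and $\widehat{(z_1^2-z_2^2)|z|^{-3}}(\xi)=-2\pi(\xi_1^2-\xi_2^2)|\xi|^{-3}$, one obtains
\[
\sigma_{-1}(\Kc_\G)(\xb_0,\xi)\;=\;-\frac{k_1\xi_2^2+k_2\xi_1^2}{4|\xi|^3}\;=\;-\frac{k_1\sin^2\th+k_2\cos^2\th}{4|\xi|},
\]
where $\th$ is the angle between $\xi$ and the first principal direction.

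Next, the single-layer operator $-\Sc_\G$ is a positive self-adjoint classical $\Psi$DO of order $-1$ on $\G$ with principal symbol $(2|\xi|)^{-1}$, so by Seeley's theorem $T:=(-\Sc_\G)^{1/2}$ is a positive elliptic self-adjoint classical $\Psi$DO of order $-1/2$. The identity \eqref{-1/2 norm} says that $u\mapsto Tu$ extends to an isometry $U\colon(H^{-\frac12}(\G),\|\cdot\|_{-\frac12})\to L^2(\G)$. Since $\Kc_\G$ is self-adjoint in the first space (Plemelj symmetrisation), the conjugate
\[
\widetilde\Kc_\G\;:=\;U\,\Kc_\G\,U^{-1}\;=\;(-\Sc_\G)^{1/2}\,\Kc_\G\,(-\Sc_\G)^{-1/2}
\]
is a compact self-adjoint classical $\Psi$DO of order $-1$ on $L^2(\G)$ with the same eigenvalues and multiplicities as $\Kc_\G$. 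Multiplicativity of principal symbols gives $\sigma_{-1}(\widetilde\Kc_\G)=\sigma_{-1}(\Kc_\G)$, and therefore $n_\pm(\l;\Kc_\G)=n_\pm(\l;\widetilde\Kc_\G)$ for every $\l>0$.

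The Birman--Solomyak asymptotic formula applied to the self-adjoint $\widetilde\Kc_\G$ of order $-1$ on the closed two-manifold $\G$ then gives
\[
n_\pm(\l)\;\sim\;\frac{1}{(2\pi)^2}\,\operatorname{vol}_{T^*\G}\bigl\{(\xb,\xi)\colon\pm\sigma_{-1}(\widetilde\Kc_\G)(\xb,\xi)>\l\bigr\},\qquad\l\downarrow0.
\]
For $\l>0$ the condition $\pm\sigma_{-1}>\l$ forces $q(\xb,\th):=k_1\sin^2\th+k_2\cos^2\th$ to have sign $\mp$ and constrains $|\xi|<q(\xb,\th)_\mp/(4\l)$ in each cotangent fibre; integrating with the polar element $r\,dr\,d\th\,dS_{\xb}$ produces $n_\pm(\l)\sim A_\pm(\G)\l^{-2}$ with $A_\pm(\G)$ as in \eqref{APM} (after the harmless substitution $\th\mapsto\th+\pi/2$). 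Inverting the counting function yields \eqref{AsPM} whenever $A_\pm(\G)>0$; when $A_\pm(\G)=0$ the Weyl volume vanishes and the same estimate reads $n_\pm(\l)=o(\l^{-2})$, equivalent to $\l_j^\pm=o(j^{-1/2})$, as stated. Relation \eqref{APMPM} is immediate from the pointwise identity $q_+^2+q_-^2=q^2$ together with \eqref{Coeff}.

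The main technical obstacle is the first step: rigorously verifying that the NP kernel in \eqref{definition of NP operators}, expressed in local coordinates on the abstract Riemannian surface $\G$ (and incorporating the surface area element), defines a classical $\Psi$DO of order $-1$ with precisely the principal symbol above, and that all subprincipal contributions lie in the $\Psi$DO calculus of order $\leq-2$. Once this is established, the use of Seeley's theorem for the square root $(-\Sc_\G)^{1/2}$, the multiplicativity of principal symbols under conjugation, and the direct invocation of \cite{BS} are standard.
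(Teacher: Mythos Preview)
Your proof follows the same overall strategy as the paper: replace $\Kc_\G$ by a self-adjoint conjugate via powers of $-\Sc_\G$, observe that the principal symbol is unchanged, and invoke the Birman--Solomyak asymptotics \cite{BS}. Your computation of the principal symbol in principal-curvature coordinates and the evaluation of the Weyl volume are correct and agree with the paper's formulas obtained via isothermal charts. (Your conjugation $(-\Sc_\G)^{1/2}\Kc_\G(-\Sc_\G)^{-1/2}$ runs in the opposite direction to the paper's $\Kp_\G=(-\Sc_\G)^{-1/2}\Kc_\G(-\Sc_\G)^{1/2}$, but both are self-adjoint on $L^2(\G)$---yours by the $H^{-1/2}$ isometry argument you give, the paper's directly from Plemelj---and both have the same eigenvalues and the same principal symbol, so this is immaterial.)

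The one substantive difference is the step you call ``direct invocation of \cite{BS}'' on the closed manifold $\G$. The paper takes the position that \cite{BS} is proved only for operators in Euclidean domains, and that its extension to closed manifolds---though often quoted as automatic---actually needs an argument. Section~4 supplies this ``delocalization'': one partitions $\G$ by characteristic functions $\chi_m$, applies \cite{BS} to each diagonal block $\chi_m\Kp_\G\chi_m$ transplanted to $\R^2$, shows that the off-diagonal blocks $\chi_m\Kp_\G\chi_{m'}$ lie in $\Sigma_2^0$ (by pseudolocality or by vanishing of the product $\chi_m\chi_{m'}$ in the Weyl integrand), and then sums via Ky Fan/Weyl perturbation. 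In the $C^\infty$ setting of Theorem~\ref{mainPM} this step is routine, so your shortcut is defensible; but the paper's explicit treatment is what also carries the argument through for the $C^{2,\alpha}$ case, where the full $\Psi$DO calculus (Seeley's theorem, composition of symbols) that you rely on is unavailable. So your proof is correct for the statement at hand, but be aware that what you label ``standard'' is precisely the point the paper singles out as requiring care.
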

We say that the surface is \emph{almost convex} at a point  $\xb$ if $k_1(\xb), k_2(\xb)$ are non-positive (of course, if the surface is convex near a point, it is almost convex there).  By \eqref{APM}, for a surface which is almost convex at each point, $A_+(\G)=A(\G)$ and $A_-(\G)=0$. Further on, the surface is called \emph{strictly convex} at $\xb$ if, moreover, the principal curvatures of the surface at $\xb$ are negative.  For a body with smooth boundary in $\R^3$ there necessarily must exist a region in $\G$ where the surface is strictly convex. Therefore, the coefficient $A_+(\G)$ may never vanish.
\begin{corollary}\label{corconvex}
  Under the conditions of Theorem \ref{mainPM}, there exist infinitely many positive eigenvalues of the NP operator.
\end{corollary}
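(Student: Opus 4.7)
The plan is to deduce the corollary directly from Theorem \ref{mainPM}: once we know $A_+(\G)>0$, the asymptotic \eqref{AsPM} immediately rules out the possibility of only finitely many positive eigenvalues, since a finite sequence cannot satisfy $\l_j^+ \sim A_+(\G)^{1/2} j^{-1/2}$. So the task reduces to proving strict positivity of $A_+(\G)$.

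By the formula \eqref{APM}, $A_+(\G)$ is the integral over $\G$ of a manifestly nonnegative density. To show this integral is positive, it suffices to exhibit a single point $\xb_0\in\G$ at which both principal curvatures are strictly negative. Indeed, at such a point $k_1(\xb_0)\cos^2\th + k_2(\xb_0)\sin^2\th \le \max\{k_1(\xb_0),k_2(\xb_0)\}<0$ for every $\th$, so the negative part appearing in the inner integrand of \eqref{APM} is uniformly bounded away from zero. By continuity of $k_1,k_2$ on the smooth surface, the same strict inequality persists on a neighborhood of $\xb_0$, and integration in $\xb$ then yields $A_+(\G)>0$.

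To produce such a strictly convex point, I would use a standard enclosing-ball argument. Fix any interior point $\xb_1\in\O$ and choose $\xb_0\in\G$ maximizing $|\xb-\xb_1|$; set $R=|\xb_0-\xb_1|$. Then $\O \subset \bar B(\xb_1,R)$, the ball touches $\G$ at $\xb_0$ from outside, and the outward unit normals to $\G$ and to $\p B(\xb_1,R)$ coincide at $\xb_0$. Comparing the second fundamental forms at $\xb_0$ (their difference is a positive semidefinite quadratic form on the common tangent plane) and using the sign convention of the paper, which makes the principal curvatures of $\p B(\xb_1,R)$ equal to $-1/R$, one concludes $k_1(\xb_0), k_2(\xb_0) \le -1/R < 0$. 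The only delicate point is bookkeeping of the sign conventions adopted in the introduction (outer normal, hence negative curvatures for a convex body), but no further tools beyond those already invoked in Theorem \ref{mainPM} are needed; I do not foresee any essential obstacle.
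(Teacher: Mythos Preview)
Your argument is correct and follows exactly the approach of the paper: the paper observes (in the paragraph preceding the corollary) that any smooth bounded surface in $\R^3$ must have a region of strict convexity, whence $A_+(\G)>0$, and then invokes Theorem~\ref{mainPM}. You simply supply the enclosing-ball justification for that geometric fact, which the paper leaves as an assertion.
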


On the other hand,  there may exist an open subset in $\G$ where the surface is \emph{not} almost convex, in other words, where at least one of principal curvatures is positive. In this case, the coefficient $A_-(\G)$ is nonzero. By continuity, this happens even if there is just \emph{one} point on the boundary where the latter is not convex.
\begin{corollary}\label{cornonconvex} If the surface $\G$ is \emph{not} almost convex at  one point, at least, there exist infinitely many negative eigenvalues of the NP operator.
\end{corollary}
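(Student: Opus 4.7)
The plan is to deduce this from Theorem \ref{mainPM} by establishing that the coefficient $A_-(\G)$ defined in \eqref{APM} is strictly positive. Once $A_-(\G)>0$ is known, the asymptotic \eqref{AsPM} reads $\l_j^-\sim A_-(\G)^{1/2}j^{-1/2}>0$ as $j\to\infty$, which forces the sequence $\{\l_j^-\}_{j\ge 0}$ to be infinite; by the convention in \eqref{spec}, this is precisely the statement that $\Kc_\G$ has infinitely many negative eigenvalues.

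To verify the positivity of $A_-(\G)$, recall from \eqref{APM} that its integrand is $[(k_1(\xb)\cos^2\th+k_2(\xb)\sin^2\th)_{+}]^2$. The hypothesis furnishes some $\xb_0\in\G$ at which at least one of $k_1(\xb_0),k_2(\xb_0)$ is strictly positive. Without loss of generality $k_1(\xb_0)>0$, the case $k_2(\xb_0)>0$ being reducible to this one by the substitution $\th\mapsto\th+\pi/2$. Evaluating the integrand at $(\xb_0,\th=0)$ then gives $[k_1(\xb_0)_+]^2=k_1(\xb_0)^2>0$.

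Since $\G$ is $C^\infty$, the principal curvatures $k_1,k_2$ depend continuously on $\xb\in\G$, so the integrand is a continuous nonnegative function of $(\xb,\th)\in\G\times[0,2\pi]$ which is strictly positive at $(\xb_0,0)$. It is therefore bounded below by a positive constant on some neighborhood of that point, and integration over this neighborhood yields $A_-(\G)>0$, completing the reduction to Theorem \ref{mainPM}. I do not anticipate any serious technical obstacle: the only point warranting care is the orientation convention adopted in the paper, under which strict convexity of $\G$ at a point corresponds to both principal curvatures being negative, so that \emph{not almost convex} is precisely the sign configuration that makes the positive-part cutoff in the integrand of $A_-(\G)$ contribute nontrivially.
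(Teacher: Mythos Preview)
Your proposal is correct and follows essentially the same route as the paper: the text preceding the corollary and the discussion in Section~\ref{sec: applications} argue that non-almost-convexity at a single point forces, by continuity of the principal curvatures, the integrand in \eqref{APM} for $A_-(\G)$ to be positive on a set of positive measure, whence $A_-(\G)>0$ and Theorem~\ref{mainPM} yields infinitely many negative eigenvalues. Your write-up simply makes the continuity step and the sign bookkeeping (the $\mp$ in \eqref{APM} and the outer-normal convention) more explicit than the paper does.
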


Further on, we can express the asymptotic formulas for the NP eigenvalues (and their counting functions)  using  some notions of surface geometry. To do this, we recall the definition  of  the Willmore energy  $W(\G)$:

\beq\label{definition of Willmore energy}
W(\G):=\int_{\G} H^2(\xb)\; dS_{\xb},
\eeq
where $H(\xb)$ is the mean curvature of the surface at $\xb$. If $\G$ consists of several connected components,  $W(\G)$ is the sum of the Willmore  energies of these components.

\begin{theorem}\label{main}
Let $\GO$ be a $C^{\infty}$ bounded region.
Then
\beq\label{lambdaAs}
\m_j(\Kc_{\G})\sim \lambda_j(\Kcal_{\G}) \sim A(\G)^{\frac12} j^{-1/2}\quad \text{as}\ j\rightarrow \infty,
\eeq
where
\begin{equation}\label{Coeff}
A(\G)=\frac{3W(\G) - 2\pi \chi(\G)}{128 \pi}.
\end{equation}
Here $W(\G)$ and $\chi(\G)$ denote, respectively, the Willmore energy and the Euler characteristic of the surface $\G$.
\end{theorem}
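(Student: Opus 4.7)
The plan is to obtain Theorem \ref{main} as a straightforward corollary of Theorem \ref{mainPM} together with a short curvature computation. First, the sign-separated asymptotics \eqref{AsPM} translate, via inversion of counting functions, into $n_{\pm}(\lambda)\sim A_{\pm}(\G)\lambda^{-2}$ as $\lambda\to 0^+$. Adding these gives $n(\lambda)=n_+(\lambda)+n_-(\lambda)\sim (A_+(\G)+A_-(\G))\lambda^{-2}$, which, inverted again, yields $\lambda_j(\G)\sim (A_+(\G)+A_-(\G))^{1/2}j^{-1/2}$. The singular-value part $\mu_j(\Kc_{\G})\sim A(\G)^{1/2}j^{-1/2}$ is already the subject of the first-named author's earlier result in \cite{Miyanishi:Weyl}, valid even under the weaker $C^{2,\a}$ hypothesis, so all that remains is the purely algebraic verification of identity \eqref{APMPM}.

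To compute $A_+(\G)+A_-(\G)$, I apply the elementary identity $t_+^2+t_-^2=t^2$ pointwise inside the integrand of \eqref{APM}, obtaining
\[
A_+(\G)+A_-(\G)=\frac{1}{128\pi^2}\int_{\G}dS_{\xb}\int_0^{2\pi}\bigl(k_1(\xb)\cos^2\th+k_2(\xb)\sin^2\th\bigr)^2 d\th.
\]
Expanding the square and using the standard trigonometric integrals $\int_0^{2\pi}\cos^4\th\,d\th=\int_0^{2\pi}\sin^4\th\,d\th=3\pi/4$ and $\int_0^{2\pi}\cos^2\th\sin^2\th\,d\th=\pi/4$, the inner integral reduces to $\tfrac{3\pi}{4}(k_1^2+k_2^2)+\tfrac{\pi}{2}k_1 k_2$. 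Writing $k_1^2+k_2^2=(k_1+k_2)^2-2k_1k_2=4H^2-2K$ with $K=k_1k_2$ the Gauss curvature and $H=(k_1+k_2)/2$ the mean curvature, this simplifies to $3\pi H^2-\pi K$. Hence
\[
A_+(\G)+A_-(\G)=\frac{1}{128\pi}\int_{\G}(3H^2-K)\,dS=\frac{3W(\G)}{128\pi}-\frac{1}{128\pi}\int_{\G}K\,dS,
\]
and applying the Gauss-Bonnet theorem $\int_{\G}K\,dS=2\pi\chi(\G)$ yields precisely \eqref{Coeff}.

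I do not foresee a genuine obstacle: the nontrivial spectral analysis has already been absorbed into Theorem \ref{mainPM} and into the cited singular-value asymptotics. The only place where care is needed is bookkeeping of signs and normalization conventions — in particular, with the outward normal convention fixed in the paper the relations $k_1+k_2=2H$ and $k_1 k_2=K$ hold without sign flips, so the Gauss-Bonnet step produces $2\pi\chi(\G)$ with the correct sign, and the fact that the $t_+^2+t_-^2=t^2$ trick cancels the nonlinearity in \eqref{APM} is what makes the computation collapse so cleanly onto a Willmore-energy/Euler-characteristic combination.
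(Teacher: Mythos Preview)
Your proposal is correct and follows essentially the same route as the paper: both derive the $\lambda_j$ asymptotics from the sign-separated formulas of Theorem~\ref{mainPM}, invoke the singular-value result (the paper re-derives it in Section~\ref{sec: Weyl} via delocalization rather than merely citing \cite{Miyanishi:Weyl}), and then reduce $A_+(\G)+A_-(\G)$ to the Willmore/Euler combination through the trigonometric integrals and Gauss--Bonnet. The only cosmetic difference is that the paper carries out the final calculation in isothermal coordinates using the second fundamental form coefficients $L,M,N$, whereas you stay in the principal-curvature form of \eqref{APM} throughout via $t_+^2+t_-^2=t^2$; the two computations are equivalent and yours is arguably tidier.
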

Thus, the NP operator has always the infinite rank (such question was discussed in \cite{KPS}) and the decay rate of NP eigenvalues is $j^{-1/2}$ for smooth regions. Furthermore, the integral (\ref{definition of Willmore energy}) is especially interesting because it has the remarkable property of being invariant under  M\"obius transformations of ${\Rbb}^3$, see \cite{Bl}. Thus we find that the asymptotic behavior of moduli of  NP eigenvalues and NP singular numbers is  also M\"obius invariant since the Euler characteristic is topologically invariant. We will present some further facts and applications later on (see section \ref{sec: applications}).
Theorem \ref{main} holds true even for $C^{2, \alpha}$ surfaces \cite{Miyanishi:Weyl}. This means that a kind of spectral cut-off happens: the eigenvalues of the NP operator may \emph{never} have a faster decay rate than $j^{-\frac12}$ and thus the operator may not ever  belong to a smaller Schatten class than $\Sigma_2$.
\begin{remark}\label{Rem1}Formula \eqref{lambdaAs} can be written in the equivalent form, using the counting functions:
\begin{equation}\label{LambdaCountAs}
    n(\l)\sim A(\G)\l^{-2},\, n_{\pm}(\l)\sim A_{\pm}(\G)\l^{-2},\,
     m(\l)=n_+(\l)+n_-(\l)\sim A(\G)\l^{-2}.
\end{equation}
\end{remark}
The above theorems are proved by means of finding the pseudodifferential operator representation for the NP operator or for a certain approximation of $\Kc_{\G}$ and further application of   classical results on the asymptotics of eigenvalues or $s$-numbers of negative order pseudodifferential operators.  Some further properties of these operators enable us to obtain sufficient conditions for the finiteness and for infiniteness of the set of negative eigenvalues.

\begin{theorem}\label{finitenegative} Let the body $\O$ with smooth boundary $\pO$ be strictly convex at all points. Then there may exist only finitely many negative eigenvalues of the NP operator.
\end{theorem}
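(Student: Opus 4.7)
\emph{Plan.} I would pass to a selfadjoint realization of $\Kc_\G$ on $L^2(\G)$ with identical spectrum, and then exploit the strict positivity of its principal symbol afforded by strict convexity.

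Since $-\Sc_\G$ is a positive, selfadjoint, elliptic classical pseudodifferential operator of order $-1$ on the closed surface $\G$, Seeley's theorem produces its bounded positive square root $(-\Sc_\G)^{1/2}$ as a classical $\Psi$DO of order $-1/2$. Plemelj's symmetrization identity $\Sc_\G\Kc_\G=\Kc_\G^*\Sc_\G$ then shows that the conjugate
\[
T:=(-\Sc_\G)^{1/2}\,\Kc_\G\,(-\Sc_\G)^{-1/2}
\]
is selfadjoint in $L^2(\G)$, has spectrum and multiplicities identical to those of $\Kc_\G$, and is itself a classical $\Psi$DO of order $-1$ on $\G$ carrying the same principal symbol as $\Kc_\G$.

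The symbolic computation that underlies Theorem~\ref{mainPM} identifies this principal symbol in principal coordinates at $\xb\in\G$ as
\[
p_{-1}(\xb,\xi)=-\,\frac{k_1(\xb)\,\xi_1^{2}+k_2(\xb)\,\xi_2^{2}}{4\,|\xi|^{3}}.
\]
Strict convexity at every point, together with compactness of $\G$, delivers a uniform bound $k_i(\xb)\le -4c_0<0$, whence $p_{-1}(\xb,\xi)\ge c_0/|\xi|$ on all of $T^*\G\setminus 0$. Consequently $T-c_0(I-\D_\G)^{-1/2}$ is a selfadjoint $\Psi$DO of order $-1$ with nonnegative principal symbol, so the sharp G\aa rding inequality yields a constant $C>0$ with
\[
\langle T u,u\rangle_{L^2(\G)} \ge c_0\,\|u\|_{H^{-1/2}(\G)}^{2}-C\,\|u\|_{H^{-1}(\G)}^{2},\qquad u\in L^2(\G).
\]

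Let $V_R\subset L^2(\G)$ be the spectral subspace of $-\D_\G$ associated with eigenvalues exceeding $R^2$. For $u\in V_R$ one has $\|u\|_{H^{-1}(\G)}^{2}\le R^{-1}\|u\|_{H^{-1/2}(\G)}^{2}$, so choosing $R>C/c_0$ forces $\langle T u,u\rangle_{L^2}\ge 0$ on $V_R$. Since $V_R^{\perp}$ is the span of the finitely many eigenfunctions of $-\D_\G$ with eigenvalue at most $R^2$ (Weyl's law for $\D_\G$), we have $\dim V_R^{\perp}<\infty$. A standard min--max argument bounds the number of negative eigenvalues of $T$ by $\dim V_R^{\perp}$, and the similarity with $\Kc_\G$ transfers this bound to $\Kc_\G$.

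The chief technical point is the sharp G\aa rding step, and the reason strict (as opposed to merely almost) convexity is essential is visible there: almost convexity only yields $p_{-1}\ge 0$ without a coercive lower bound of the form $p_{-1}\ge c_0/|\xi|$, and the subprincipal part of $T$ could then generate infinitely many small negative eigenvalues. Strict convexity provides precisely the coercive ellipticity that makes the leading part dominate every lower-order remainder above a finite frequency threshold.
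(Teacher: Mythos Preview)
Your argument is correct in strategy and runs parallel to the paper's: pass to a selfadjoint realization of $\Kc_\G$ with the same spectrum, observe that strict convexity makes the principal symbol uniformly positive, and then produce a subspace of finite codimension on which the quadratic form is nonnegative, so that min--max bounds the number of negative eigenvalues. The implementations differ in the last step. You invoke the sharp G\aa rding inequality for the order $-1$ operator $T-c_0(I-\Delta_\G)^{-1/2}$, obtaining $\langle Tu,u\rangle\ge c_0\|u\|_{H^{-1/2}}^2-C\|u\|_{H^{-1}}^2$, and then cut off to high Laplace modes. The paper instead \emph{constructs} a positive comparison operator $\Kb$ with the same principal symbol (writing $\kb^{-1}=\lb^2$, taking $\Rc=\Lb^*\Lb$ with $\Lb$ a $\Psi$DO of order $1/2$, and setting $\Kb=\Rc^{-1}$ after a finite-rank correction), so that $\Kp-\Kb$ has order $\le -2$; it then factors $\Kp-\Kb=\Kb Z\Kb$ with $Z$ bounded and restricts to the small-eigenvalue spectral subspaces of $\Kb$. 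Your G\aa rding route is shorter and black-box; the paper's is more constructive and self-contained, avoiding any appeal to sharp G\aa rding. Both exploit the same mechanism---ellipticity of the positive principal symbol dominates lower-order terms above a finite frequency threshold.

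One slip to fix: in the paper's convention the Plemelj identity is $\Sc_\G\Kc_\G^{*}=\Kc_\G\Sc_\G$, not $\Sc_\G\Kc_\G=\Kc_\G^{*}\Sc_\G$, and correspondingly the selfadjoint conjugate is $(-\Sc_\G)^{-1/2}\Kc_\G(-\Sc_\G)^{1/2}$ rather than your $T=(-\Sc_\G)^{1/2}\Kc_\G(-\Sc_\G)^{-1/2}$. Swapping the exponents $\pm\tfrac12$ repairs this; the remainder of your argument is unaffected.
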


To illustrate the meaning of the general results, let us consider the case $\p\GO=S^2$. It has been proved already by Poincar\'e \cite{Poincare-AM-87} that the NP eigenvalues on the two-dimensional sphere are $\frac{1}{2(2k+1)}$ for $k = 0, 1, 2\, \ldots $ and their multiplicities are $2k +1$.

It easily follows that the $j=k^2$-th eigenvalue satisfies
$$|\lambda_j(\Kcal_{S^2})|=\frac{1}{2(2k+1)} \sim \frac{1}{4} j^{-1/2}. $$
On the other hand, by Theorem \ref{main},
$|\lambda_j(\Kcal_{S^2})|\sim\frac{1}{4} j^{-1/2}$ since ${W(S^2) =4\pi}$ and $\chi(S^2)=2$.  This calculation, is, of course,  consistent with the above asymptotics of the explicit eigenvalues. Moreover, due to the convexity of the sphere, by Theorem \ref{finitenegative}, there may exist only a finite number of negative eigenvalues - in this example there are no such eigenvalues at all, so $\l^{+}_j(\Kcal_{S^2})\sim \frac{1}{4} j^{-1/2}$. In the example of an oblate spheroid, it was found in \cite{Ah2} that there exists at least one negative eigenvalue. Our result complements it by stating that there may exist only a finite set of such eigenvalues.

Another illustration concerns the surface $\pO$ being diffeomorphic to a torus  in $\R^3$. Such surface, obviously, contains points where it is not convex. Therefore, by Corollary \ref{cornonconvex}, the NP operator possesses infinitely many negative eigenvalues. For the standard metric torus, this property was recently established in \cite{AKJiKKM} by means of an explicit calculation. More generally, for any surface which is not simply connected, there are infinitely many negative eigenvalues.

Finally, suppose that the body $\O$ has some holes inside, so that the surface $\G$ consists of several connected components. In this case, the interior part of the boundary possesses necessarily a fragment where the surface is not almost convex. Therefore, for the NP operator the set of negative eigenvalues is infinite.

It is worth comparing  our decay rate results for the three dimensional NP eigenvalues, obtained here, with those for the two-dimensional NP eigenvalues. There, in the latter case, it is well known that the eigenvalues of the integral operator $\Kcal_{\G}$ are symmetric with respect to the origin. The only exception is the eigenvalue $1/2$ corresponding to the constant eigenfunction. NP eigenvalues are invariant under M\"obius transformations \cite{MR0104934}.
One of the main distinguished features here is that the decay rate  depends essentially on the smoothness of the boundary. Indeed,  it is  proved in \cite{AKM2, Miyanishi:2015aa} that if the boundary is $C^k$ smooth then
for any $\tau >-k+3/2$,
$$
|\lambda^{\pm}_j(\Kcal_{\G})| = o(j^{\tau}) \quad \text{as}\; j\rightarrow \infty.
$$
 Moreover for an \emph{analytic} boundary, the eigenvalues  have at least the exponential decay rate:
$$
|\Gl^{\pm}_{j}(\Kcal_{\G})| \le Ce^{-j\Ge} \quad \text{as}\; j\rightarrow \infty,
$$
Here $\Ge$ is the modified Grauert radius of $\p\GO$ (see \cite{AKM2} for the precise statement). For a piecewise analytic smooth $\G$ the second-named author has established recently the estimate $\l_j^{\pm}=O(e^{-cj^{\frac12}}).$ All these results contain upper estimates for eigenvalues. It is a challenge: with exception for M\"obius transformed ellipses, there exists presently not a single example of curves where the asymptotics of NP eigenvalues is  known. In particular, it is unknown whether there exist curves for which the eigenvalue decay is super-exponential, in other words, the question on the spectral cut-off property is still open here.

The paper is organized as follows. In the next section we establish some important  relationships between  singular- and eigenvalues using Ky-Fan's theorem and the Plemelj's symmetrization principle for NP operators. In section \ref{symbol}, we introduce the approximate pseudo-differential operators for NP operators. Further on, in section 4  we show that \ref{sec: Weyl} the relations established in section 3 yield the Weyl law for NP eigenvalues.
Some applications and a discussion are provided in section \ref{sec: applications}.

V. Maz'ya made an essential contribution to the field of boundary integral equations. His paper \cite{Maz} had a great influence in the topic. We are happy to be able to contribute to the special volume dedicated to his jubilee and wish him many more years of productive activity.
\section{Preliminaries on Schatten classes and perturbations}\label{sec: notations}

As preliminaries, we shall recall  some results on Schatten classes, used in this paper.

Let $K$ be a compact operator in a separable Hilbert space $H$. The singular values (s-numbers) $\{s_j(K) \}$ are the eigenvalues of $(K^*K)^{1/2}$ numbered in the nonincreasing order, counting multiplicities.
\beq
\sigma_{sing}(K)=\{\; s_j(K) \mid s_1(K) \geq s_2(K) \geq s_3(K) \geq\; \cdots\}.
\eeq
The algebra of operators satisfying $\Vert K\Vert_{\SF^p}^p={\rm{tr}}(K^*K)^{p/2}=\sum_{j=1}^{\infty} s_j(K)^{p}<\infty$
is called the Schatten class $\SF^p$. It is known that for $K\in \SF^p$, the singular values satisfy

\begin{equation}\label{week S}
s_j(K)=o(j^{-1/p}).
\end{equation}

The class of operators $K$ which satisfy \eqref{week S}
is called  \emph{small} weak Schatten class $\Si^0_p$, and the operators satisfying \eqref{week S} with $o(j^{-1/p})$ replaced by $O(j^{-1/p})$, form the weak Schatten class $\Si_p$ (see, e.g.,  \cite{Si}). Thus,  $ \SF^p\subset\Si_p^0\subset\Si_p$.
In particular, the Schatten class $\SF^2$ carries the individual name Hilbert-Schmidt class.
An integral operator $K$ with kernel $K(x,y)$ acting in $L^2(M)$
for some measure space $M$,  belongs to $\SF^2$ if and only if $\int\int_{M\times M}|K(x,y)|^2dxdy<\infty;$ for classes $\Si_2^0,\Si_2$ there exist no exact conditions for this kind of inclusion. An important subset in $\Si_2$ consists of operators for which the asymptotic relation
\begin{equation}\label{AbstrAs}
    s_j(K)\sim C j^{-\frac12}
\end{equation}
holds.
The following statement (by Ky Fan, \cite{KyFan}) shows that the property \eqref{AbstrAs} is stable under perturbations of $K$, belonging to the class $\Si_2^0$.

\begin{prop}\label{perturbation} If for a compact  operator $K$ the asymptotics \eqref{AbstrAs} holds and $R\in\Si_2^0$ then for the operator $K'=K+R$, the asymptotics $s_j(K')\sim C j^{-\frac12}$ is valid with the same constant $C$.
\end{prop}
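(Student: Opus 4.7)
\medskip

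The plan is to deduce the stability of the asymptotics from the Ky Fan additivity inequality for singular numbers, which states that for any compact operators $A,B$ on $H$ and any positive integers $p,q$,
\begin{equation*}
s_{p+q-1}(A+B)\le s_p(A)+s_q(B).
\end{equation*}
I will apply it twice: once to $K'=K+R$ in order to control $s_j(K')$ from above in terms of singular numbers of $K$ and of $R$, and once to the decomposition $K=K'+(-R)$ in order to recover a matching lower bound for $s_j(K')$. In both cases the splitting of the index $j$ will be $j=p+q-1$ with $q=\lfloor\ve j\rfloor$ and $p\approx(1-\ve)j$, where $\ve>0$ is a small auxiliary parameter that will be sent to $0$ at the end of the argument.

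For the upper bound, Ky Fan gives $s_j(K')\le s_p(K)+s_q(R)$. Multiplying by $j^{1/2}$, the first term satisfies
\begin{equation*}
j^{1/2}s_p(K)=(j/p)^{1/2}\,p^{1/2}s_p(K)\longrightarrow (1-\ve)^{-1/2}\,C
\end{equation*}
by the hypothesis $s_p(K)\sim Cp^{-1/2}$, while the second satisfies
\begin{equation*}
j^{1/2}s_q(R)=(j/q)^{1/2}\,q^{1/2}s_q(R)\longrightarrow \ve^{-1/2}\cdot 0=0,
\end{equation*}
because the assumption $R\in\Sigma_2^0$ means precisely $q^{1/2}s_q(R)\to 0$. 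Hence $\limsup_j j^{1/2}s_j(K')\le C(1-\ve)^{-1/2}$, and letting $\ve\to 0^+$ yields $\limsup_j j^{1/2}s_j(K')\le C$.

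The lower bound is symmetric. Applying Ky Fan to $K=K'+(-R)$ with the same choice of $p,q$ gives $s_j(K)\le s_p(K')+s_q(R)$, hence $s_p(K')\ge s_j(K)-s_q(R)$. Writing $a_p:=p^{1/2}s_p(K')$, the same kind of manipulation gives
\begin{equation*}
a_{p(j)}\ge (p/j)^{1/2}\,j^{1/2}s_j(K)-(p/q)^{1/2}\,q^{1/2}s_q(R)\longrightarrow (1-\ve)^{1/2}\,C,
\end{equation*}
so $\liminf_j a_{p(j)}\ge(1-\ve)^{1/2}C$. Sending $\ve\to 0^+$ will then finish the proof, provided one checks that the index function $j\mapsto p(j)=j-\lfloor\ve j\rfloor$ is eventually non-decreasing and surjective onto the large integers, so that $\liminf_j a_{p(j)}=\liminf_p a_p$. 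This bookkeeping point, rather than any analytic difficulty, is the main thing to verify carefully; after that the two-sided bound gives $p^{1/2}s_p(K')\to C$, as claimed.
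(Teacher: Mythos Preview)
Your argument is correct. The paper does not actually supply a proof of this proposition; it simply attributes the statement to Ky Fan's paper \cite{KyFan} and moves on. Your derivation via the Ky Fan inequality $s_{p+q-1}(A+B)\le s_p(A)+s_q(B)$, applied symmetrically to $K'=K+R$ and to $K=K'-R$ with an $\varepsilon$-splitting of the index, is precisely the standard route one has in mind when citing that reference, and the bookkeeping you flag (that $j\mapsto p(j)=j-\lfloor\varepsilon j\rfloor+1$ is nondecreasing with unit or zero increments, hence hits every large integer) is handled correctly.
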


As well known, starting from   H. Weyl, for \emph{self-adjoint} compact operators, a similar stability  of the  asymptotic law for separately positive and negative eigenvalues under weaker perturbations takes place as well.

\begin{prop}\label{pertSA}If for  compact  self-adjoint operators $K$, $R$ there is the  asymptotics $\l_j^{\pm}(K)\sim C_{\pm} j^{-\frac12}$ and $R\in\Si_2^0$ then for the operator $K'=K+R$, the asymptotics $\l^{\pm}_j(K')\sim C_{\pm} j^{-\frac12}$ is valid with the same constants $C_{\pm}$.
\end{prop}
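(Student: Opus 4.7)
The plan is to deduce the statement from the classical Ky Fan--Weyl inequalities for eigenvalues of sums of compact self-adjoint operators, exploiting the strict order gap between the $j^{-1/2}$ rate of $\l^\pm_j(K)$ and the $o(j^{-1/2})$ rate of the eigenvalues of the perturbation $R$. The inequalities in question read
\begin{equation*}
\l^+_{j+k-1}(A+B)\le \l^+_j(A)+\l^+_k(B),\qquad \l^+_{j+k-1}(A)\le \l^+_j(A+B)+\l^-_k(B)
\end{equation*}
for any compact self-adjoint $A,B$ and any integers $j,k\ge 1$; the second follows from the first applied to $A+B$ and $-B$, using $\l^+_k(-B)=\l^-_k(B)$. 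Since $R$ is self-adjoint and lies in $\Si_2^0$, one has $\l^+_k(R),\l^-_k(R)\le s_k(R)=o(k^{-1/2})$.

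The plan for the upper bound on $\l^+_j(K')$, with $K':=K+R$, is to fix a small $\d>0$, set $k=k(j):=\lfloor\d j\rfloor+1$, and apply the first inequality with $A=K,\ B=R$. The combination of $\l^+_j(K)\sim C_+ j^{-1/2}$ with $\l^+_k(R)=o(k^{-1/2})=o(j^{-1/2})$ (for fixed $\d$) will produce, after multiplication by $(j+k-1)^{1/2}$, the bound $\limsup_{n\to\infty}\,n^{1/2}\l^+_n(K')\le (1+\d)^{1/2}C_+$; letting $\d\to 0^+$ then yields $\limsup\le C_+$. The matching lower bound is obtained symmetrically from the companion inequality with $A=K',\ B=-R$, again with $k\sim\d j$ and $\d\to 0^+$. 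Applying the same argument to $-K,-R$ in place of $K,R$ handles the negative eigenvalues $\l^-_j$.

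The one subtle scale-matching step is the choice of the shift $k(j)$ comparable to $j$ rather than constant: since $\l^\pm_k(R)$ decays only like $o(k^{-1/2})$, a fixed $k$ would not make the perturbation terms negligible against $C_\pm j^{-1/2}$; letting $k\sim\d j$ does, at the price of distorting the index by a factor $1+\d$, and only the freedom to send $\d\to 0^+$ afterwards recovers the \emph{same} asymptotic constants $C_\pm$ on both sides. This is the classical Weyl perturbation mechanism that also underlies Proposition \ref{perturbation}, and I foresee no further obstacles.
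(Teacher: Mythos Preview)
The paper does not actually prove this proposition; it is stated there as a classical fact (``As well known, starting from H.\ Weyl\dots'') without argument. Your proof via the Ky Fan--Weyl eigenvalue inequalities together with the index shift $k\sim\delta j$ and the final passage $\delta\to 0^+$ is precisely the standard proof of such perturbation statements, and it is correct.

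One small labeling slip: for the lower bound on $\lambda^+_j(K')$ you want the inequality $\lambda^+_{j+k-1}(K)\le\lambda^+_j(K')+\lambda^-_k(R)$, which comes from applying the \emph{first} inequality with $A=K',\ B=-R$ (or, equivalently, the second inequality with $A=K,\ B=R$). Applying the second inequality with $A=K',\ B=-R$, as you wrote, yields $\lambda^+_{j+k-1}(K')\le\lambda^+_j(K)+\lambda^+_k(R)$, which is just the upper bound again. This is only a mislabeling of which substitution to make; the argument itself is sound.
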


 For non-selfadjoint operators, generally, \eqref{AbstrAs} does not imply that the eigenvalues or their absolute values follow a similar asymptotic law.
The statement to follow enables us, however,  to pass from the asymptotics of $s$-numbers to the asymptotics of (moduli of) eigenvalues under some additional conditions.

\begin{prop}\label{almost self-adjoint1} Let $K$ be a symmetrizable compact operator, namely, there exists
an invertible operator $S$ such that $S^{-1}KS$ is self-adjoint. Assume
\begin{enumerate}
\item $S^{-1}KS-K \in \Si_2^0;$
\item $s_j(K) \sim C j^{-1/2}\quad\text{as}\ j\rightarrow \infty$.
\end{enumerate}
Then $\l_j(K) \sim s_j(K) \sim Cj^{-1/2}\quad\text{as}\ j\rightarrow \infty,$
where, recall, $\l_j(K)$ are the moduli of eigenvalues of $K$ numbered in the non-increasing order counting algebraic multiplicities.
\end{prop}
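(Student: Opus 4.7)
The plan is to transfer the $s$-number asymptotics of $K$ to the moduli of eigenvalues of $K$ by going through the similar self-adjoint operator $\tilde K := S^{-1}KS$. The key observation is that the spectrum together with its algebraic multiplicities is a similarity invariant, whereas singular numbers are not; assumption (1) is precisely what is needed to guarantee that the singular numbers of $\tilde K$ still obey the same asymptotic law as those of $K$, after which the self-adjointness of $\tilde K$ lets us read off the conclusion.

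In detail, I would proceed in three steps. First, since $\tilde K = S^{-1}KS$ with $S$ bounded and invertible (so $S^{-1}$ is bounded by the open mapping theorem), the map $u \mapsto S^{-1}u$ restricts to an isomorphism between the generalized eigenspaces of $K$ at an eigenvalue $\mu$ and those of $\tilde K$ at the same $\mu$. Hence $K$ and $\tilde K$ share the same eigenvalues with the same algebraic multiplicities, and in particular their non-increasingly ordered moduli satisfy $\lambda_j(K) = \lambda_j(\tilde K)$ for every $j$. Second, write $\tilde K = K + R$ with $R := S^{-1}KS - K$; by assumption (1), $R \in \Sigma_2^0$, so Proposition \ref{perturbation} together with assumption (2) gives $s_j(\tilde K) \sim C j^{-1/2}$. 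Third, since $\tilde K$ is self-adjoint and compact, its singular numbers coincide with the moduli of its eigenvalues: $s_j(\tilde K) = \lambda_j(\tilde K)$. Chaining these three identifications yields $\lambda_j(K) = \lambda_j(\tilde K) = s_j(\tilde K) \sim C j^{-1/2} \sim s_j(K)$, which is the claim.

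The main conceptual subtlety is that the perturbation $R$ is not self-adjoint in general, so one cannot apply the self-adjoint perturbation result Proposition \ref{pertSA} to deduce signed eigenvalue asymptotics for $K$ from those of $\tilde K$. However, since the statement only asks about the moduli $\lambda_j$, the weaker Ky Fan-type stability of $s$-numbers in Proposition \ref{perturbation} is exactly sufficient. The entire argument thus reduces to two essentially separate ingredients: similarity preserves eigenvalues with algebraic multiplicity, and $\Sigma_2^0$-perturbation preserves $s$-number asymptotics. Neither is hard in isolation; the content of hypothesis (1) is precisely the bridge that marries the two and converts an $s$-number statement about $K$ into an eigenvalue statement about $K$.
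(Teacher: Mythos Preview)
Your proof is correct and follows essentially the same route as the paper's own argument: the paper also observes that $S^{-1}KS$ shares the eigenvalues of $K$, that for a self-adjoint operator the singular numbers coincide with the moduli of eigenvalues, and then invokes Proposition~\ref{perturbation} to transfer the $s$-number asymptotics from $K$ to $S^{-1}KS$. Your write-up is simply more explicit about each step (and about why Proposition~\ref{pertSA} is not what is being used here).
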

\begin{proof}
We notice that the operator $S^{-1}KS$ has the same eigenvalues of $K$.
Since the singular values of self-adjoint operator are the absolute values of eigenvalues,
the result follows from Proposition \ref{perturbation}.
\end{proof}

\section{Layer potentials as pseudo-differential operators}\label{symbol}
The single layer potential operator $-\Scal_{\G}$ is  positive, invertible, self-adjoint, and it satisfies Plemelj's symmetrization principle (also known as the Calder\'on's identity):
$$\Scal_{\G} \Kcal^{*}_{\G} =\Kcal_{\G} \Scal_{\G}.$$ Thus we can symmetrize the NP operator via single layer potentials, namely,
\begin{align}\label{symmetrized NP}
  \Kp_{\G} ={(-\Scal_{\G})}^{-1/2} \Kcal_{\G} {(-\Scal_{\G})}^{1/2}
\end{align}
is  self-adjoint in $L^2(\G)$. Our aim in this section is to find a pseudo-differential representation of the symmetrized NP operator $\Kp_\G$ \eqref{symmetrized NP}. To this purpose, we represent the single layer potential $\Scal_{\G}$ and the double layer potential $\Kcal_{\G}$ as PsDO.
Thanks to the smoothness condition, the asymptotic expansion of the integral kernel in  terms, homogeneous in $\xb-\yb$, yields a PsDO  for any local chart on the boundary manifold. The calculations in \cite{Miyanishi:Weyl} show that the principal symbol  of the NP operator, of order $-1$, equals in local co-ordinates $x',\x$
\begin{equation}\label{PrincSymbol}
p_{\Gamma}(x',\x)=-\frac{L(x') \xi_2^2 -2M(x')\xi_1 \xi_2 +N(x')\xi_1^2}{4 \det (g_{jk}) \left(\sum_{j, k}g^{jk}(x')\xi_j\xi_k\right)^{3/2}}
\end{equation}
(note especially here the minus sign in front of the fraction.) Here  $g_{jk}(x')$ denotes the metric tensor and $L(x'), M(x'), N(x')$ are the coefficients of the second fundamental form on the boundary $\G$ in the local co-ordinates  $x'$ in some domain in $\R^2$ with, recall (this is highly important), the normal vector directed to the exterior of $\O$.

The single layer potential $\Scal_{\G}$ is also PsDO of order $-1$ with principal symbol
$(-4\pi |\xi|_{x'})^{-1}.$
Here $|\xi|_{x'}=\sqrt{\sum_{j, k} g^{jk}(x')\xi_j \xi_k}$ on each local chart \cite{Agrano-book}.
It is known that the operator $-\Scal_{\G}$  is  positive and invertible, it is  elliptic, and therefore  the complex powers of $-\Sc_\G$ are pseudodifferential operators. The operator $(-\Scal_{\G})^z$ has principal symbol $(4\pi |\xi|_x)^{z}$. Therefore the principal symbol of $\Kp$ equals the product of principal symbols of $\Kc, \Sc $ and $\Sc^{-1}$, and thus,  the principal symbol of the symmetrized NP operator coincides with the one of the original NP operator. In this way, we obtained:
\begin{theorem}\label{symbol of symmtrized op}
Let $\Omega \subset{\mathbb R}^3$ be a bounded smooth region. Then the symmetrized NP operator $\Kp_\G$ is an order $-1$ pseudodifferential operator on $\G$ with principal symbol $p_{\G}(x',\x)$ given by \eqref{PrincSymbol}.
\end{theorem}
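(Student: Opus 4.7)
My plan is to combine three ingredients of the classical pseudodifferential calculus on the smooth closed surface $\G$: the already-established PsDO representations of $\Sc_\G$ and $\Kc_\G$ with their principal symbols; Seeley's theorem on complex powers of positive elliptic PsDO; and the multiplicativity of the principal symbol under composition.

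Since the paragraphs preceding the theorem have already recorded that $\Kc_\G$ and $-\Sc_\G$ are classical PsDO of order $-1$ on $\G$ with principal symbols $p_\G(x',\xi)$ and $(4\pi|\xi|_{x'})^{-1}$ respectively, the bulk of the work is to set up the complex powers of $-\Sc_\G$. I would first observe that $-\Sc_\G$ is self-adjoint and strictly positive on $L^2(\G)$: positivity follows from \eqref{-1/2 norm}, which exhibits $\langle -\Sc_\G u,u\rangle$ as the square of a norm on a dense subspace. Ellipticity is immediate from the principal symbol $(4\pi|\xi|_{x'})^{-1}$, which is strictly positive on $T^*\G\setminus 0$. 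By Seeley's theorem, the complex powers $(-\Sc_\G)^z$ are then classical PsDO of order $-z$ whose principal symbol equals the $z$-th power of the principal symbol of $-\Sc_\G$; in particular $(-\Sc_\G)^{\pm 1/2}$ are PsDO of orders $\mp 1/2$ with principal symbols $(4\pi|\xi|_{x'})^{\mp 1/2}$.

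Applying the composition rule of the PsDO calculus to
\begin{equation*}
\Kp_\G = (-\Sc_\G)^{-1/2}\, \Kc_\G\, (-\Sc_\G)^{1/2},
\end{equation*}
one obtains a classical PsDO of total order $\tfrac{1}{2} + (-1) + (-\tfrac{1}{2}) = -1$ whose principal symbol at $(x',\xi)$ is the pointwise product
\begin{equation*}
(4\pi|\xi|_{x'})^{1/2} \cdot p_\G(x',\xi) \cdot (4\pi|\xi|_{x'})^{-1/2} = p_\G(x',\xi),
\end{equation*}
which is exactly \eqref{PrincSymbol}. The only nontrivial step is the appeal to Seeley's construction of complex powers, which requires ellipticity and a spectral cut avoiding the negative real axis; both are guaranteed by the positivity and ellipticity verified above, so the main technical work has in fact already been carried out in the PsDO identifications of $\Sc_\G$ and $\Kc_\G$ recorded just before the theorem.
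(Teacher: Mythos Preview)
Your proof is correct and follows essentially the same approach as the paper: both use the PsDO representations of $\Kc_\G$ and $-\Sc_\G$, invoke Seeley's theorem on complex powers of a positive elliptic operator to identify $(-\Sc_\G)^{\pm 1/2}$ as PsDO with principal symbols $(4\pi|\xi|_{x'})^{\mp 1/2}$, and then apply multiplicativity of principal symbols under composition to see the scalar factors cancel. If anything, your write-up is slightly more explicit about the orders and the cancellation than the paper's terse paragraph.
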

For the boundary of finite smoothness $C^{2,\a}$, the above reasoning is not valid since a convenient  symbolic calculus for operators on such manifolds is not developed. We repeat, for further reference, the local approximation result from \cite{Miyanishi:Weyl} in a convenient form:
\begin{proposition}\label{PropFinSmooth} Let $\O\subset\R^3$ be a bounded domain with $C^{2,\a}$ boundary. Let $V\subset \G$ be a coordinate patch so that  $\pi:V\to\R^2$ is a co-ordinate mapping generating the isometry $\tilde{\pi}$ of $L^2(V)$ to $L^2(U)$, $U=\pi V\subset \R^2$. If $\psi_1,\psi_2$ are bounded functions with support in $U$ then the projected operator $\tilde{\pi} \psi_1 \Kc_{\G} \psi_2\tilde{\pi}^{-1}$ is the sum of
a pseudodifferential operator in $U$ with symbol
\begin{equation}\label{symbolLocal}
 h(x',\x)=h_{\psi_1,\psi_2}(x',\x)=\tilde{\pi}\psi_1(x')\tilde{\pi}\psi_2(x')p_\G(x',\xi)
\end{equation}

  and a Hilbert-Schmidt class operator, with $p_\G$ given in \eqref{PrincSymbol}.
  This symbol is smooth in $\x$ variable and belongs to $C^\a$ in $x'$.
\end{proposition}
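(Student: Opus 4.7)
The plan is to write down the Schwartz kernel of $\tilde\pi\psi_1\Kc_\G\psi_2\tilde\pi^{-1}$ on $U\times U$, Taylor-expand the parametrisation to the highest order that $C^{2,\alpha}$ allows, split the kernel into a part that is positive-homogeneous of degree $-1$ in $y'-x'$ and a remainder, recognise the Fourier transform of the principal part as $p_\Gamma(x',\xi)$ from \eqref{PrincSymbol}, and show that the remainder defines a Hilbert--Schmidt operator.

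Concretely, let $r:U\to V$ be the inverse of $\pi$ and let $J(x')=\sqrt{\det g_{jk}(x')}$. After conjugating with the isometry $\tilde\pi$, the kernel of $\tilde\pi\psi_1\Kc_\G\psi_2\tilde\pi^{-1}$ takes the form
\begin{equation*}
K(x',y')=\frac{E(x',y')}{4\pi}\,\frac{\langle r(y')-r(x'),\nb(r(y'))\rangle}{|r(x')-r(y')|^3},
\end{equation*}
where $E$ collects the cut-offs and the Jacobian factors $J(x')^{1/2}J(y')^{1/2}$. Using $r\in C^{2,\alpha}$ I would write $r(y')-r(x')=Dr(x')(y'-x')+\tfrac12 D^2r(x')(y'-x')^{\otimes 2}+\rho(x',y')$ with $|\rho(x',y')|\le C|y'-x'|^{2+\alpha}$, and $\nb(r(y'))=\nb(r(x'))+O(|y'-x'|)$. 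The crucial cancellation $\langle Dr(x')z,\nb(r(x'))\rangle\equiv 0$ reduces the numerator to a quadratic form in $z:=y'-x'$ whose coefficients are exactly the second-fundamental-form entries $-L,-2M,-N$ at $x'$ (the minus signs recording the outer-normal convention), up to an $O(|z|^{2+\alpha})$ error. Similarly $|r(x')-r(y')|^{-3}=[g_{jk}(x')z^jz^k]^{-3/2}(1+O(|z|^\alpha))$. Freezing $E$ on the diagonal gives the decomposition $K(x',y')=E(x',x')\,k_0(x',y'-x')+K_1(x',y')$ with $k_0(x',\cdot)$ positive-homogeneous of degree $-1$; its partial Fourier transform in $z$ is the routine computation already carried out in \cite{Miyanishi:Weyl} and produces precisely $p_\Gamma(x',\xi)$, so $E(x',x')\,k_0$ is the kernel of a PsDO with symbol $h$ as in \eqref{symbolLocal}.

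The main obstacle will be justifying Hilbert--Schmidt membership of $K_1$ from only finite smoothness of $\G$. For this I need the \emph{uniform} pointwise bound $|K_1(x',y')|\le C|y'-x'|^{-1+\alpha}$ on $\supp\psi_1\times\supp\psi_2$, which is what the H\"older modulus of continuity of $D^2 r$, of $\nb\circ r$, and of $E$ on the compact chart gives (pointwise existence of the second derivatives is not enough; one really uses the $C^\alpha$ quantitative control). Once this bound is in hand, $\iint|y'-x'|^{-2+2\alpha}\,dx'\,dy'<\infty$ for any $\alpha>0$, so $K_1\in L^2(U\times U)$ and the associated operator is Hilbert--Schmidt. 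The asserted symbol regularity reads off \eqref{PrincSymbol}: since $g_{jk}\in C^{1,\alpha}$ and $L,M,N\in C^\alpha$, one has $p_\Gamma(\cdot,\xi)\in C^\alpha$, while smoothness in $\xi\ne 0$ is evident from the explicit rational structure.
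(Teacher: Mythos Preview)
The paper does not give its own proof of this proposition; it is quoted from \cite{Miyanishi:Weyl} (``We repeat, for further reference, the local approximation result from \cite{Miyanishi:Weyl} in a convenient form''). Your sketch is precisely the standard argument that underlies that reference: Taylor-expand the parametrisation to order $2+\alpha$, use the orthogonality $\langle Dr(x')z,\nb\rangle=0$ to see that the numerator is quadratic in $z$ with second-fundamental-form coefficients, extract the degree $-1$ homogeneous kernel whose fibrewise Fourier transform is $p_\G$, and bound the remainder by $C|z|^{-1+\alpha}$ so that it lies in $L^2(U\times U)$. Both the strategy and the key estimates are correct.

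One small point worth tightening. You freeze the whole factor $E$ (cut-offs and Jacobians together) on the diagonal. The Jacobian part is harmless because $J\in C^{1,\alpha}$, so $|J(y')-J(x')|\,|z|^{-1}=O(1)$ is square-integrable. But $\psi_1,\psi_2$ are only assumed \emph{bounded}, so $(\psi_2(y')-\psi_2(x'))k_0(x',z)$ need not be in $L^2$. The fix is not to freeze the cut-offs at all: keep them as left/right multipliers, so that the ``pseudodifferential operator with symbol $h$'' is understood in the Birman--Solomyak sense $b\,Op(p_\G)\,c$ with $b=\tilde\pi\psi_1$, $c=\tilde\pi\psi_2$ (exactly how the paper uses the proposition in Theorem~\ref{BSth} and Proposition~\ref{3A}). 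With that reading your remainder bound $|K_1|\le C|z|^{-1+\alpha}$ goes through without any regularity on the $\psi_i$, and the claimed $C^\alpha$ regularity in $x'$ refers to $p_\G$ itself, which is what you verify in your last sentence.
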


\section{Local considerations and delocalization}\label{sec: Weyl}

Our aim now is to obtain the eigenvalue asymptotics for the operator ${\Kp_{\G}}$. For a smooth surface $\G=\pO$, this would follow from Theorem \ref{symbol of symmtrized op} and the basic result by Birman-Solomyak, \cite{BS}, on the asymptotics of singular numbers and eigenvalues for negative order pseudodifferential operators. For $C^{2,\a}$ surfaces, a perturbation approach was used in \cite{Miyanishi:Weyl}, with a  representation in local charts of the NP operator as a sum of a pseudodifferential operator and a Hilbert-Schmidt one, with again  further using Birman-Solomyak's result to obtain the asymptotics of moduli of eigenvalues. However, the analysis of the paper \cite{BS} shows that some more explanations are needed when applying the results of this paper to our setting.

In the original paper \cite{BS}, \emph{only} the case  of a pseudodifferential operator in a domain of the Euclidean space, with moderately regular symbol,  was considered in detail. Just a brief remark was included that the results, for a homogeneous symbol, carry over 'easily' to operators on smooth manifolds. Somehow, this statement migrated to later publications by various authors, even applied to manifolds with \emph{finite} smoothness. It turns out, however, that certain complications arise in this generalization, even for the smooth case. The authors of \cite{BS} were quite aware of these complications. In particular, when applying in \cite{BirYaf} the results of \cite{BS}  to the study of the asymptotics of scattering phases, they presented the corresponding reasoning, which turned out to be rather involved, requiring some additional technicalities. The problem consists just in the passage from  the eigenvalue asymptotics of operators in local charts to the whole manifold. We call this stage `delocalization'. In this section, we describe such delocalization, including the case of a finite smoothness, considered in \cite{Miyanishi:Weyl}. In our setting, even for a finite smoothness, the 'delocalization' is somewhat easier  than in \cite{BirYaf}, since in our  very special particular case of the NP operator on a two-dimensional manifold, the most simple Hilbert-Schmidt estimates are sufficient. In other dimensions our reasoning is also possible, however it becomes somewhat more technical, due to the absence of sharp analytical criteria for an integral operator to belong to Schatten classes other than $\SF_2$. We consider the case of a surface of the class $C^{2,\a}$; the case of an infinitely smooth surface is more simple.

We reproduce here Theorem 2 from \cite{BS}, adapted to our particular  case.
\begin{theorem}\label{BSth}Let $T$ be a pseudodifferential operator of order $-1$ with homogeneous symbol $a(x',\x)$ in a bounded domain $U\subset\R^2$, smooth in the variable $\x$ and belonging to $C^{\a}, \a>0,$ in $x'$ variable. Let also $b(x'), c(x')$ be bounded weight functions supported in $U$. Then for the operator $L=bTc$, the asymptotic formula for singular numbers  holds:
\begin{equation}\label{BS formula s}
    s_j(L)j^{\frac12}=C^{\frac12}(1+o(1)),\  C=\frac{1}{8\pi^2}\int_U\int_{S^1} (|b(x')||c(x')||a(x',\o)|)^2dx' d\o,
\end{equation}
and, for a self-adjoint operator $T$, with $b(x')=\bar{c}(x')$, the formula for eigenvalues holds:
\begin{equation}\label{BS formula l}
    \l^{\pm}_j j^{\frac12}=C_{\pm}^{\frac12}(1+o(1)), C_{\pm}=\frac{1}{8\pi^2}\int_U\int_{S^1} (|b(x')|^2a(x',\o)_{\pm})^2dx' d\o.
\end{equation}
\end{theorem}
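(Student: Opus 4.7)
The plan is to follow the Birman--Solomyak strategy: first reduce to a model operator with $x'$-independent symbol via freezing of coefficients, next compute the asymptotics explicitly in the model case, and finally assemble the contributions via a partition of unity together with the asymptotic additivity of counting functions. The $C^\a$-regularity in $x'$ will be used at the freezing stage to guarantee that the error belongs to the ideal $\Si_2^0$, so that Propositions \ref{perturbation} and \ref{pertSA} can be invoked.

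\emph{Step 1 (Freezing the symbol).} For a fixed $x_0'\in U$, compare $L=bTc$ with the operator $L_{x_0'}$ whose symbol is $b(x')\,a(x_0',\x)\,c(x')$. The symbol difference $a(x',\x)-a(x_0',\x)$ is bounded by $C|x'-x_0'|^{\a}|\x|^{-1}$ by the H\"older assumption, so that on a patch of diameter $\d$ the difference $L-L_{x_0'}$ is still a pseudodifferential operator of order $-1$, but with an extra factor of $\d^{\a}$ in its symbol estimates. A standard Hilbert--Schmidt calculation then gives $s_j(L-L_{x_0'})\le C\d^{\a} j^{-1/2}$, and by letting the patch shrink one concludes that the freezing error is in $\Si_2^0$ in the limit.

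\emph{Step 2 (Model computation).} For an operator with symbol $b(x')a(x_0',\x)c(x')$, compactly supported in $x'$ and homogeneous of degree $-1$ in $\x$, the asymptotics
\[
s_j^2\sim \frac{1}{8\pi^2 j}\int_U\int_{S^1}(|b(x')||c(x')||a(x_0',\o)|)^2\,dx'\,d\o
\]
can be established by a direct Fourier-analytic calculation on a square with periodic identifications, essentially the two-dimensional Weyl law for homogeneous pseudodifferential symbols. In the self-adjoint subcase with $b=\bar c$, the positive and negative spectral subspaces of the model operator decouple, yielding the separate formulas for $\l^\pm_j$ with integrands $(|b|^2 a_\pm)^2$.

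\emph{Step 3 (Delocalization).} Choose a partition of unity $\sum_k\vf_k^2\equiv 1$ on $\supp b\cup\supp c$ with $\mathrm{diam}\,\supp\vf_k\le \d$, and decompose
\[
L=\sum_k \vf_k L\vf_k+\sum_{j\ne k}\vf_j L\vf_k.
\]
The cross-terms $\vf_j L\vf_k$ with disjoint supports have smooth kernels and hence lie in every Schatten ideal; they contribute only to $\Si_2^0$. Each diagonal term $\vf_k L\vf_k$ is handled by Step 1 together with Step 2, giving a contribution whose leading coefficient is the Riemann sum of the integrand over the patch. The key ingredient is the asymptotic additivity of the counting functions for non-negative compact operators, which combined with Proposition \ref{perturbation} (respectively \ref{pertSA}) allows the sum over $k$ to be performed term by term. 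Letting $\d\to 0$, the Riemann sums converge to the stated integral, proving \eqref{BS formula s} and \eqref{BS formula l}.

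The main technical obstacle is Step 1: a naive bound on $L-L_{x_0'}$ only yields membership in $\Si_2$, not in $\Si_2^0$, because the operator remains of pseudodifferential order $-1$. The quantitative gain of a small factor $\d^{\a}$ from H\"older regularity is what makes Step 3 work, because it lets one first fix the required accuracy in the perturbation theorem and then choose the partition fine enough; without the H\"older condition on $a(\cdot,\x)$ this scheme would fail. For an infinitely smooth surface this issue does not arise, since one may use the full symbolic calculus of classical pseudodifferential operators and reduce the proof to the version of the Birman--Solomyak theorem on a smooth compact manifold, where the delocalization is already part of the standard framework.
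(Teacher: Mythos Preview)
The paper does not prove Theorem~\ref{BSth}: it is quoted verbatim from Birman--Solomyak \cite{BS} (their Theorem~2), merely specialized to order $-1$ operators in a planar domain. The sentence immediately preceding the statement says so explicitly. Hence there is no ``paper's own proof'' to compare your proposal against; the paper \emph{uses} this result as a black box and its own contribution (the ``delocalization'' of Section~\ref{sec: Weyl}) is the passage from the Euclidean theorem to the surface $\G$, which is a separate matter.

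That said, your sketch is recognizably the Birman--Solomyak strategy, and as an outline it is in the right spirit. Two points would need tightening if you actually wanted to carry it out. First, in Step~3 the identity $L=\sum_k\vf_k L\vf_k+\sum_{j\ne k}\vf_j L\vf_k$ requires $\sum_k\vf_k\equiv 1$, not $\sum_k\vf_k^2\equiv 1$ as you wrote; and the cross-terms $\vf_jL\vf_k$ for \emph{overlapping} supports (which a smooth partition of unity necessarily has) are not smoothing---they are still order $-1$ pseudodifferential operators and lie only in $\Si_2$, not in $\Si_2^0$. One must instead control their $\Si_2$-quasinorm by something that goes to zero with $\d$ and run a two-parameter limiting argument (fix accuracy first, then refine the partition), exactly as you correctly describe for the freezing error in Step~1. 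Second, the phrase ``the freezing error is in $\Si_2^0$ in the limit'' is not a meaningful statement about a single operator; what you need and what your quantitative bound $s_j\le C\d^{\a}j^{-1/2}$ actually gives is that $\limsup_j j^{1/2}s_j(L-L_{x_0'})\le C\d^{\a}$, which then feeds into the Ky~Fan inequality. These are fixable details, but they are precisely the places where the proof in \cite{BS} does real work.
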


On the  surface $\G$, we consider a finite system of disjoint open subsets $\G_m$ so that $\bigcup\overline{\G_m}=\G$. We denote by $\G_m^s$ the 'star' of $\G_m$, namely, the union of $\overline{\G_m}$ and those sets $\overline{\G_{m'}}$ for which $\overline{\G_m}\cap\overline{\G_{m'}}\neq\emptyset$. We can suppose that this decomposition is so fine that, for some choice of points $\xb_m\in\G_m$, the orthogonal projection $\pi_m$ of $\G_m$ to the tangent plane $T_m$ at  $\xb_m$ is a homeomorphism, moreover, it is a homeomorphism of $\G_m^s$. We denote by $U_m$ the range of  $\G_m^s$ under $\pi_m$, $U_m=\pi_m (\G_m^s)$. The smoothness conditions imposed on $\G$ imply that $\pi_m^{-1}$, considered as a mapping from $U_m$ to $\R^3$, is of the class $C^{2,\a}$. Further on, the mapping $\pi_m$ generates in a usual way isometries of Hilbert spaces $\widetilde{\pi_m}:L_2(U_m)\to L_2(\G_m^s)$, which are also isometries $\widetilde{\pi_m}:L_2(\pi_m(\G_m))\to L_2(\G_m)$.

Let $\h_m$ be the characteristic function of the set $\G_m$. The NP operator $\Kc$ can be represented as
\begin{equation}\label{repr}
    \Kc=\sum_{m,m'}\Kc_{m,m'},\,\, \Kc_{m,m'}=\h_m \Kc \h_{m'}.
\end{equation}

Our first statement about the singular numbers  asymptotics is the following.

\begin{proposition}\label{3A}
For any $m$, as $j\to\infty$,
\begin{equation}\label{asympLocal}
s_j(\Kc_{m,m})j^{\frac12}= C_m^{\frac12}(1+o(1)), C_m=\frac{1}{8\pi^2}\int_{\pi_m\G_m}\int_0^{2\pi}|h(x',\th)|^2dx'd\th,
\end{equation}
where $h(x',\x)$ is given by \eqref{symbolLocal} with $\psi_1=\psi_2=\tilde{\h_m}$.
\end{proposition}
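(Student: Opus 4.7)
The plan is to use the isometry $\tilde\pi_m$ to transport $\Kc_{m,m}$ to an operator on $L^2(U_m)$, invoke Proposition \ref{PropFinSmooth} to split this operator into a pseudodifferential part plus a Hilbert--Schmidt error, apply the Birman--Solomyak formula \eqref{BS formula s} of Theorem \ref{BSth} to the pseudodifferential part, and then absorb the remainder using the perturbation result Proposition \ref{perturbation}.

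First I would observe that since $\tilde\pi_m:L^2(\G_m^s)\to L^2(U_m)$ is unitary, the singular numbers are invariant under transport: $s_j(\Kc_{m,m}) = s_j(L_m)$, where $L_m:=\tilde\pi_m\,\Kc_{m,m}\,\tilde\pi_m^{-1}$. Writing $\tilde{\h}_m$ for the characteristic function of $\pi_m(\G_m)\subset U_m$ (the counterpart of $\h_m$ under the identification $\pi_m$), Proposition \ref{PropFinSmooth} with $\psi_1=\psi_2=\h_m$ furnishes a decomposition
\begin{equation*}
L_m \;=\; T_m + R_m,
\end{equation*}
where $T_m$ is a pseudodifferential operator on $U_m$ with symbol $h(x',\x) = \tilde{\h}_m(x')^{2}\,p_\G(x',\x)$, smooth in $\x$ and of class $C^{\a}$ in $x'$ on the support, while $R_m$ is a Hilbert--Schmidt operator.

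Next, writing $T_m = b\,S\,c$ with bounded, compactly supported weights $b=c=\tilde{\h}_m$ and $S$ the pseudodifferential operator with principal symbol $p_\G$, formula \eqref{BS formula s} of Theorem \ref{BSth} yields
\begin{equation*}
s_j(T_m)\, j^{1/2} \;\to\; \left(\frac{1}{8\pi^{2}}\int_{U_m}\!\int_{S^{1}} \tilde{\h}_m(x')^{4}\,|p_\G(x',\o)|^{2}\,dx'\,d\o\right)^{\!1/2}.
\end{equation*}
Since $\tilde{\h}_m$ takes only the values $0$ and $1$, we have $\tilde{\h}_m^{\,4}=\tilde{\h}_m^{\,2}$, so the integrand coincides with $|h(x',\o)|^{2}$ and vanishes off $\pi_m(\G_m)$; parametrizing $S^{1}$ by $\th\in[0,2\pi)$ recovers precisely $C_m^{1/2}$.

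Finally, a Hilbert--Schmidt operator lies in $\SF^{2}\subset\Si_2^0$, hence its singular numbers are $o(j^{-1/2})$. Proposition \ref{perturbation} (the Ky Fan stability statement) then guarantees that the $j^{-1/2}$ asymptotic law survives the addition of $R_m$, giving $s_j(L_m)\,j^{1/2}\to C_m^{1/2}$, which is the claim. The one subtle point is that the weight $\tilde{\h}_m$ is only bounded (indeed discontinuous), so one must verify that Theorem \ref{BSth} admits such rough weights; this is part of the Birman--Solomyak framework in \cite{BS}, where smoothness is demanded only of the underlying symbol $p_\G$, while the weights are permitted to be arbitrary bounded functions of compact support in $U_m$. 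The real delocalization work, namely handling the off-diagonal pieces $\Kc_{m,m'}$ and assembling the local contributions into a global asymptotic formula, is postponed to subsequent propositions.
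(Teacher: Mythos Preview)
Your proposal is correct and follows essentially the same route as the paper: transport by the unitary $\tilde\pi_m$, invoke Proposition~\ref{PropFinSmooth} to write the local piece as a pseudodifferential operator with symbol $p_\G$ and weights $\tilde\chi_m$ plus a Hilbert--Schmidt remainder, apply the Birman--Solomyak formula~\eqref{BS formula s}, and absorb the remainder via Ky~Fan stability. Your version is in fact slightly more explicit than the paper's (you spell out why $\tilde\chi_m^4=\tilde\chi_m^2$ and flag the rough-weight issue), and you correctly cite Proposition~\ref{perturbation} for the perturbation step rather than Proposition~\ref{almost self-adjoint1}.
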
\begin{proof}
By means of the isometry $\widetilde{\pi_m}$ of Hilbert spaces $L^2(\G_m)$ and $L^2(U_{m})$, the operator $\Kc_{m, m}$ turns out to be unitary equivalent to the pseudodifferential operator $P_m$ with symbol determined by  \eqref{PrincSymbol} and weight functions $b=c=\tilde{\pi}_m\h_m(x')$ plus a Hilbert-Schmidt operator.  Further on, the symbol of the pseudodifferential operator $P_m$, is smooth in the $\xi$ variable and belongs to $C^{\alpha}$ in $x'$ variable. Therefore, to this operator we can apply Theorem \ref{BSth} establishing the asymptotics of singular numbers of a pseudodifferential operator in a domain in the Euclidean space,  which is given by formulas \eqref{BS formula s}.   The addition of a Hilbert-Schmidt operator, by the Weyl  inequality and Proposition \ref{almost self-adjoint1} does not change the leading term in these asymptotic formulas.
\end{proof}

The asymptotics, just found, is responsible for the diagonal terms $m=m'$ in the representation \eqref{repr}. The proposition to follow proves that the non-diagonal terms in \eqref{repr} do not affect these asymptotics.
\begin{proposition}\label{prop.as.nondia}
Let $m\ne m'$. Then for the operator $\Kc_{m,m'}=\h_m\Kc\h_{m'}$,
\begin{equation}\label{o-estimate}
   s_j(\Kc_{m,m'})=o(j^{-\frac12}).
\end{equation}
\end{proposition}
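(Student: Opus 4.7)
My plan is to prove the stronger statement that $\Kc_{m,m'}$ is Hilbert--Schmidt, i.e.\ $\Kc_{m,m'}\in\SF^2$. This already implies \eqref{o-estimate}: for any compact operator the singular values are non-increasing, so the Hilbert--Schmidt property $\sum_j s_j^2<\infty$ combined with the tail estimate
\[
N\,s_{2N}(\Kc_{m,m'})^2 \;\le\; \sum_{j=N+1}^{2N} s_j(\Kc_{m,m'})^2 \;\longrightarrow\; 0,\qquad N\to\infty,
\]
forces $s_j(\Kc_{m,m'})=o(j^{-1/2})$, which is exactly membership in $\Si_2^0$. It therefore suffices to verify that the integral kernel
\[
K_{m,m'}(\xb,\yb) \;=\; \frac{\h_m(\xb)\,\h_{m'}(\yb)}{4\pi}\cdot\frac{\langle \yb-\xb,\nb(\yb)\rangle}{|\xb-\yb|^3}
\]
lies in $L^2(\G\times\G)$.

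The $C^{2,\a}$ regularity of $\G$ yields, via a Taylor expansion using that $\yb-\xb$ is tangent to $\G$ at $\xb$ to leading order, the bound $|\langle \yb-\xb,\nb(\yb)\rangle|\le C|\xb-\yb|^2$, hence $|K_{m,m'}(\xb,\yb)|\le C'|\xb-\yb|^{-1}$ near the diagonal, while $K_{m,m'}$ is bounded away from the diagonal. If $\overline{\G_m}\cap\overline{\G_{m'}}=\emptyset$, the kernel is bounded on a compact set and is trivially in $L^2$. The substantive case is $\overline{\G_m}\cap\overline{\G_{m'}}\ne\emptyset$. Let $L$ be this common boundary (a finite union of curves in $\G$) and fix local coordinates $(s,t)$ on a tubular neighborhood of $L$ with $L=\{t=0\}$, $\{t>0\}\subset\G_m$, $\{t<0\}\subset\G_{m'}$. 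Then $|\xb-\yb|^2\ge c\bigl((s_\xb-s_\yb)^2+(t_\xb-t_\yb)^2\bigr)$, and the sign constraint $t_\xb>0>t_\yb$ gives $t_\xb-t_\yb=t_\xb+|t_\yb|$. Integrating $|K_{m,m'}|^2$ first in $\sigma=s_\xb-s_\yb$ produces a bound $\le \pi/(t_\xb+|t_\yb|)$, after which the squared $L^2$-norm is controlled by a constant multiple of
\[
\int_0^\ve\!\!\int_0^\ve \frac{dt_1\,dt_2}{t_1+t_2},
\]
which is elementary and finite.

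The main obstacle, though mild, is that the NP kernel's singularity $|\xb-\yb|^{-1}$ on a two-dimensional surface is precisely borderline: on the full product $\G\times\G$ the square is not integrable, and the entire argument depends on the geometric fact that for $m\ne m'$ the support $\overline{\G_m}\times\overline{\G_{m'}}$ meets the diagonal $\{(\xb,\xb):\xb\in\G\}$ only along the one-dimensional set $\{(\xb,\xb):\xb\in L\}$. This codimension jump converts the borderline logarithmic divergence one would see in the full-diagonal integral into a convergent logarithmic integral in the transverse coordinates, which is what delivers the Hilbert--Schmidt property and hence the desired $o(j^{-1/2})$ decay.
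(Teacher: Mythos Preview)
Your argument is correct and gives a slightly stronger conclusion than needed: you show $\Kc_{m,m'}\in\SF^2$ outright, whereas the paper only obtains $\Kc_{m,m'}\in\Si_2^0$. The route, however, is genuinely different from the paper's.

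In the adjacent case $\overline{\G_m}\cap\overline{\G_{m'}}\ne\emptyset$ the paper does \emph{not} attempt a direct kernel estimate. Instead it invokes Proposition~\ref{PropFinSmooth} to write $\Kc_{m,m'}$, after transplanting to a chart, as a Hilbert--Schmidt operator plus a pseudodifferential operator with weight functions $b=\tilde\pi\h_m$, $c=\tilde\pi\h_{m'}$, and then applies the Birman--Solomyak formula \eqref{BS formula s}. Since $\h_m\h_{m'}=0$ a.e., the coefficient $C_{m,m'}$ vanishes and one reads off $s_j=o(j^{-1/2})$. This is heavier machinery (Theorem~\ref{BSth} is already doing the work for the diagonal blocks, so reusing it is natural in context), but it has the advantage of requiring no regularity whatsoever of the partition: only $\h_m\h_{m'}=0$ a.e.\ is used. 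Your argument is more elementary and self-contained, but it tacitly assumes that the common boundary $L=\overline{\G_m}\cap\overline{\G_{m'}}$ is a finite union of curves admitting tubular coordinates $(s,t)$; the paper's setup (``a finite system of disjoint open subsets'') does not guarantee this. Since one is free to choose the decomposition, this is harmless---just note that you are imposing, and may impose, piecewise $C^1$ boundaries on the $\G_m$.
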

\begin{proof} We consider two cases: $\overline{\G_m}\cap\overline{\G_{m'}}\ne\emptyset$ and $\overline{\G_m}\cap\overline{\G_{m'}}=\emptyset$.

In the first case, again, the operator $\Kc_{m,m'}$ is unitary equivalent to the sum of a Hilbert-Schmidt operator and a pseudodifferential operator in $L_2(\overline{\G_m}\bigcup\overline{\G_{m'}})$ having the form
\begin{equation}\label{PsDO.Local}
    \Pb_{\G_m,\G_{m'}}u(x')=(2\pi)^{-2}\h_m(x')
    \int_{\R^2}\int\limits_{\overline{\G_m}\bigcup\overline{\G_{m'}}} P(x',\xi)e^{(x'-y')\xi}\h_{m'}(y')dy'd\xi.
\end{equation}
By Theorem \ref{BSth}, the singular numbers of the operator \eqref{PsDO.Local} have asymptotics $C_{m,m'}j^{-\frac12}$ with coefficient $C_{m,m'}$ determined by the integration of the symbol and the weight functions $\h_m,\h_m'$ over $\pi_m(\overline{\G_m}\bigcup\overline{\G_{m'}})\times S^1$. However, in this particular case, the product $\h_m(x')\h_m'(x')$ equals zero almost everywhere and the integral annules. Therefore, the coefficient in front of $j^{-\frac12}$ in the asymptotic formula for the singular numbers of the operator $\Kc_{m,m'}$ equals zero. This means that $s_j(\Kc_{m,m'})=o(j^{-\frac12})$, just what we were aiming to.

In the second case, the distance between the compact sets $\G_m$ and $\G_{m'}$ is positive, therefore the integral kernel of the NP operator is bounded on $\G_m\times\G_{m'}$. This implies, in particular, that this kernel is square integrable over $\G_m\times\G_{m'}$, and therefore the operator $\Kc_{m,m'}$ belongs to the Hilbert-Schmidt class. As we explained before, this means that $s_j(\Kc_{m,m'})=o(j^{-\frac12})$.
\end{proof}
Now we collect the local parts of $\Kc$ to obtain the spectral asymptotics.
\begin{theorem}\label{TheoremAsymptotics s} Let  $\G$ be a $C^{2,\a}$. Then the following asymptotic formula is valid:
\begin{equation}\label{AsFormGlobal s}
    \s_j(\Kc)\sim A(\G)^{\frac12}j^{-\frac12}, \, A(\G)=\frac{1}{8\pi^2}\int_{S^*\G} |p_{\G}(\xb, \omega)|^2\; dS_\xb d\omega
\end{equation}
\end{theorem}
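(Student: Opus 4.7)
The plan is to combine the local asymptotics from Propositions \ref{3A} and \ref{prop.as.nondia} with the Ky Fan stability principle of Proposition \ref{perturbation}. Starting from the decomposition \eqref{repr}, I split $\Kc = D + R$, where $D := \sum_m \Kc_{m,m}$ is the block-diagonal part and $R := \sum_{m\ne m'}\Kc_{m,m'}$ is the off-diagonal remainder, and I estimate each piece separately.

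For $D$, the key observation is that, since the $\G_m$ are pairwise disjoint up to sets of measure zero, $L^2(\G) = \bigoplus_m L^2(\G_m)$, and $D$ is genuinely block-diagonal with respect to this orthogonal decomposition: the $m$-th block is $\Kc_{m,m}$ acting on $L^2(\G_m)$. Consequently the singular-value sequence of $D$ is the non-increasing rearrangement of $\bigcup_m \{s_j(\Kc_{m,m})\}$, and the counting functions are additive: $n_D(\l) = \sum_m n_{\Kc_{m,m}}(\l)$. By Proposition \ref{3A} each term satisfies $n_{\Kc_{m,m}}(\l)\sim C_m \l^{-2}$, so $n_D(\l)\sim(\sum_m C_m)\l^{-2}$ and hence $s_j(D)\sim(\sum_m C_m)^{1/2} j^{-1/2}$. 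For the off-diagonal remainder $R$, each of the finitely many operators $\Kc_{m,m'}$ with $m\ne m'$ lies in $\Si_2^0$ by Proposition \ref{prop.as.nondia}; since $\Si_2^0$ is closed under finite sums (an immediate consequence of the inequality $s_{j+k-1}(A+B)\le s_j(A)+s_k(B)$), we have $R\in\Si_2^0$. Proposition \ref{perturbation} applied to $\Kc = D+R$ therefore yields $s_j(\Kc)\sim(\sum_m C_m)^{1/2} j^{-1/2}$.

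The remaining step, which I expect to be the main technical obstacle, is the identification of $\sum_m C_m$ with the invariant coefficient $A(\G) = \tfrac{1}{8\pi^2}\int_{S^*\G}|p_\G(\xb,\o)|^2\,dS_\xb d\o$ of \eqref{AsFormGlobal s}. Each $C_m$ is computed in a chart $(\pi_m,U_m)$ as a purely Euclidean integral over $\pi_m\G_m \times S^1$ with respect to $dx'\,d\th$, whereas $A(\G)$ is expressed intrinsically on the unit cotangent bundle $S^*\G$. The reconciliation rests on the fact that $|p_\G|^2$ is positive-homogeneous of degree $-2$ in $\x$, coinciding with minus the dimension of the base; this is exactly the degree that makes the local Euclidean Weyl integrals invariant under change of charts and lets them reassemble, upon summation over the partition, into the coordinate-free expression for $A(\G)$. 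For a $C^\infty$ surface this invariance is automatic from the standard calculus of pseudodifferential operators under diffeomorphisms; for the $C^{2,\a}$ case, where no such calculus is at hand, one must check the change of variables directly from the explicit formula \eqref{PrincSymbol}. This is the delocalization step flagged in the text, and in our two-dimensional setting it is significantly easier than in \cite{BirYaf}, because only Hilbert--Schmidt-type bookkeeping is required to absorb the off-diagonal contributions and the chart overlaps.
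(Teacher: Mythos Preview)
Your argument is correct and follows essentially the same route as the paper: the block-matrix splitting $\Kc=D+R$, additivity of the counting functions for the orthogonal diagonal blocks via Proposition~\ref{3A}, absorption of the off-diagonal part in $\Si_2^0$ via Proposition~\ref{prop.as.nondia}, and the Ky~Fan stability principle (Proposition~\ref{perturbation}). The only difference is that you are more explicit than the paper about the identification $\sum_m C_m = A(\G)$; the paper simply asserts that the local coefficients ``should be added up, which produces the coefficient in~\eqref{AsFormGlobal s}'' and relegates the coordinate-free rewriting to the subsequent isothermal computation, whereas you correctly flag the homogeneity-of-degree~$-2$ mechanism behind the chart invariance.
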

\begin{proof}We use the block-matrix representation of the operator $\Kc$ with respect to the system of orthogonal subspaces $H_m=L^2(\G_m):$
\begin{equation}\label{split}
    \Kc=\sum_{m,m'}\h_m\Kc \h_{m'}=\sum{\Kc_{m,m'}}.
\end{equation}
The terms on the diagonal, $m=m'$ in \eqref{split} act in orthogonal subspaces, therefore  the set of s-numbers of $\sum_m \Kc_{m,m}$ is the union of such sets of $\Kc_{m,m}$, and thus the distribution function of its s-numbers  equals the sum of distribution functions for $\Kc_{m,m}$. Therefore, the asymptotic coefficients in these formulas, found in the previous lemmas, should be added up, which produces the coefficient in \eqref{AsFormGlobal s}. On the other hand, off-diagonal terms in \eqref{split} have singular numbers decaying faster than $j^{-\frac12}$ and therefore, by Propositions \ref{almost self-adjoint1}, \ref{perturbation} they do not contribute to the leading term of the asymptotics. This reasoning takes care of the asymptotics for the singular numbers.
\end{proof}
This theorem justifies the reasoning in \cite{Miyanishi:Weyl} concerning the reference to the result by Birman-Solomyak.

For the case of smooth boundary, a similar reasoning should be repeated as applied to the self-adjoint operator $\Kp$. The passage from the local asymptotic formula  \eqref{BS formula l} to the global one goes as above, just by using Proposition \ref{pertSA} and the pseudolocality property of classical pseudodifferential operators.
\begin{theorem}Let $\G$ be a smooth boundary of $\O$. Then the following asymptotic formula is valid
 \begin{equation}\label{Asymp l}
 \l_j^{\pm}(\Kc)\sim A_{\pm}(\G)^{\frac12}j^{-\frac12}, \, A_{\pm}(\G)=\frac{1}{8\pi^2}\int_{S^*\G} p_{\G}(x', \omega)_{\pm}^2\; dx' d\omega.
 \end{equation}
 \end{theorem}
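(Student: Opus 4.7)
The plan is to parallel the singular-number argument of Theorem \ref{TheoremAsymptotics s}, but now working with the \emph{symmetrized} operator $\Kp$, which is self-adjoint on $L^2(\G)$ and therefore admits separate positive and negative spectral asymptotics. Since $\Kp = (-\Scal_{\G})^{-1/2}\Kcal_{\G}(-\Scal_{\G})^{1/2}$ is obtained from $\Kcal_{\G}$ by conjugation with the invertible elliptic operator $(-\Scal_{\G})^{1/2}$, the two operators have the same eigenvalues with the same algebraic multiplicities, so $\l_j^{\pm}(\Kcal_{\G}) = \l_j^{\pm}(\Kp)$ and it suffices to prove the asymptotics for $\Kp$. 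By Theorem \ref{symbol of symmtrized op}, $\Kp$ is a classical pseudodifferential operator of order $-1$ on the smooth manifold $\G$, with the same principal symbol $p_{\G}(x',\x)$ given by \eqref{PrincSymbol}.

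I would then set up the same localization as in the proof of Theorem \ref{TheoremAsymptotics s}: choose a finite decomposition $\G = \bigcup\overline{\G_m}$ into small coordinate patches, denote by $\h_m$ the characteristic function of $\G_m$, and split
\begin{equation*}
\Kp = \sum_{m} \h_m \Kp \h_m + \sum_{m \neq m'} \h_m \Kp \h_{m'}.
\end{equation*}
For each diagonal block, the isometry $\widetilde{\pi}_m$ conjugates $\h_m\Kp\h_m$ to an operator of the form $bTc$ in a planar domain, with $b=c=\widetilde{\pi}_m\h_m$ real and $T$ a self-adjoint pseudodifferential operator whose symbol is $p_{\G}$ expressed in the chart (modulo a smoothing error which is Hilbert--Schmidt). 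Applying the self-adjoint Birman--Solomyak formula \eqref{BS formula l} of Theorem \ref{BSth}, together with Proposition \ref{pertSA} to absorb the Hilbert--Schmidt remainder, produces the local positive/negative asymptotics
\begin{equation*}
\l_j^{\pm}(\h_m\Kp\h_m) \sim C_{m,\pm}^{1/2}\, j^{-1/2}, \qquad C_{m,\pm} = \frac{1}{8\pi^2}\int_{\G_m}\int_{S^1} p_{\G}(\xb,\o)_{\pm}^2\, dS_{\xb}\, d\o.
\end{equation*}

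The off-diagonal blocks are handled by pseudolocality. When $\overline{\G_m}\cap\overline{\G_{m'}}=\emptyset$, the classical PsDO $\Kp$ has a smooth Schwartz kernel away from the diagonal, so $\h_m\Kp\h_{m'}$ is a smoothing operator lying in $\SF^2 \subset \Si_2^0$. When $\overline{\G_m}\cap\overline{\G_{m'}}\neq\emptyset$ but $\G_m\cap\G_{m'}=\emptyset$, the projected operator is, up to a Hilbert--Schmidt piece, a pseudodifferential operator in local coordinates whose $bTc$-weight product $\h_m(x')\h_{m'}(x')$ vanishes almost everywhere, so the Birman--Solomyak coefficient in \eqref{BS formula s} is zero and the block again belongs to $\Si_2^0$, exactly as in Proposition \ref{prop.as.nondia}.

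It remains to assemble the pieces. The diagonal sum acts block-diagonally on the orthogonal decomposition $L^2(\G) = \bigoplus_m L^2(\G_m)$, so its positive, respectively negative, spectra are the disjoint unions of the corresponding spectra of the individual blocks, and the counting functions add, yielding
\begin{equation*}
\l_j^{\pm}\!\Bigl(\sum_m \h_m\Kp\h_m\Bigr) \sim \Bigl(\sum_m C_{m,\pm}\Bigr)^{1/2}\! j^{-1/2} = A_{\pm}(\G)^{1/2}\, j^{-1/2},
\end{equation*}
since $\sum_m \int_{\G_m}(\,\cdot\,)\,dS_{\xb}=\int_{\G}(\,\cdot\,)\,dS_{\xb}$. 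Since the off-diagonal cross-sum is self-adjoint (as the difference of the self-adjoint operators $\Kp$ and $\sum_m \h_m\Kp\h_m$) and lies in $\Si_2^0$, Proposition \ref{pertSA} promotes the asymptotics to $\Kp$, and hence to $\Kcal_{\G}$. The main obstacle is the delocalization step in the self-adjoint setting: additivity of eigenvalue counts over an orthogonal block decomposition is automatic, but one must verify carefully that the off-diagonal cross-terms really lie in $\Si_2^0$ (not merely that they are compact), in order that Proposition \ref{pertSA} supply the correct constants $A_{\pm}(\G)$ \emph{separately} for the positive and negative parts. Pseudolocality of classical PsDO in the smooth case disposes of the non-adjacent blocks at once, and the vanishing-coefficient argument already used in the proof of Theorem \ref{TheoremAsymptotics s} handles the adjacent ones.
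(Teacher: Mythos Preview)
Your proposal is correct and follows essentially the same approach as the paper: the paper's own proof is a one-paragraph sketch saying that the argument of Theorem \ref{TheoremAsymptotics s} should be repeated for the self-adjoint operator $\Kp$, with Proposition \ref{pertSA} replacing Proposition \ref{perturbation} and pseudolocality handling the off-diagonal blocks. Your write-up is simply a careful elaboration of exactly this scheme, including the additional observation that the off-diagonal remainder is self-adjoint so that Proposition \ref{pertSA} applies.
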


To finish the proof of Theorem \ref{mainPM} and Theorem \ref{main},  we calculate the positive constants $A(\G), A_\pm(\G)$. To achieve  this, we may use the isothermal charts, so that locally the metric has the form $\sum_{i,j} g_{ij}dx^i dx^j=E(x')(dx_1^2+dx_2^2)$. Then the surface element is  $dS=E(x') dx'$ and, after summing the local contributions, we obtain
\begin{align*}
A_{\pm}(\G)&=\frac{1}{8\pi^2}\int_{\p\GO} \int_{S^1} \Big[\frac{L(\xb) \xi_2^2 -2M(\xb)\xi_1 \xi_2 +N(\xb)\xi_1^2}{4 \det (g_{ij}) \{ \sqrt{\sum_{j, k} g^{jk}(\xb)\xi_j \xi_k}\}^3} \Big]_{\mp}^2\; d\xi dx' \\
&=\frac{1}{8\pi^2} \int_{\p\GO} \int_{S^1}\Big[\frac{L(\xb) \cos^2 \theta -2M(\xb)\cos \theta \sin \theta + N(\xb) \sin^2 \theta}{4 E^2(\xb) E^{-3/2}(\xb)} \Big]_{\mp}^2\; d\th dx' \\
&=\frac{1}{128\pi^2} \int_{\p\GO} \int_{S^1} \frac{ [(L(\xb) \cos^2 \theta -2M(\xb)\cos \theta \sin \theta + N(\xb) \sin^2 \theta)_{\mp}]^2}{E^2(\xb)} \; d\th dS_\xb.\end{align*}
For fixed $\xb$ one can diagonalize the last equation by an orthogonal matrix and so
\begin{align*}
A_{\pm}(\G)&=\frac{1}{128\pi^2} \int_{\p\GO} \int_{S^1} [(k_1(\xb) \cos^2 \tilde\theta + k_2(\xb) \sin^2 \tilde\theta)_{\mp}]^2 \; d\tilde\th dS_\xb.
\end{align*}
This gives us the required formula  for $A_{\pm}(\G)$ as a coordinate free representation,
For the sum $A(\G)=A_+(\G)+A_-(G)$, we can add up the above expressions for $A_{\pm}(\G)$ to obtain

\begin{align*}
A(G)&=\frac{1}{128\pi^2} \int_{\p\GO} \frac{ (\frac{3\pi}{4}L^2(\xb)+\frac{3\pi}{4}N^2(\xb)+\pi M^2(\xb) + \frac{\pi}{2}L(\xb)N(\xb)) }{E^2(\xb)} \; dS_{\xb} \\
&=\frac{1}{128\pi^2} \int_{\p\GO} \frac{ (\frac{3\pi}{4}L^2(\xb)+\frac{3\pi}{4}N^2(\xb)+\pi (L(\xb)N(\xb)-E^2(\xb)K(\xb)) + \frac{\pi}{2}L(\xb)N(\xb)) }{E^2(\xb)} \; dS_\xb \\
&=\frac{3}{512 \pi} \int_{\p\GO} \Big[\left(\frac{L(\xb)+N(\xb)}{E(\xb)} \right)^2 -\frac{4}{3} K(\xb) \Big]\; dS_{\xb} \\
&=\frac{3}{512 \pi} \int_{\p\GO} 4H^2(\xb)\; dS_{\xb} - \frac{1}{64} \chi (\p\GO) \\
&=\frac{3W(\p\GO) - 2\pi \chi(\p\GO)}{128 \pi}.
\end{align*}
Here we used the Gauss-Bonnet theorem in the integration of Gaussian curvature.

\section{Applications and remarks}\label{sec: applications}

\subsection{Plasmonic eigenvalues}
The interest in the NP operator, especially in its spectral properties, has been growing rapidly recently,  due to its connection to the plasmon resonance and the anomalous localized resonance in meta materials possessing negative material characteristics, for example, dielectric constants. These resonances occur at eigenvalues and at the accumulation points of eigenvalues of the NP operator, respectively (see \cite{ACKLM, MFZ-PR-05} and references therein). The spectral nature of the NP operator is also related to stress concentration between hard inclusions \cite{BT-ARMA-13}.

As an application of our results, let us consider plasmonic eigenvalues (see e.g. \cite{Grieser} and references therein). A real number $\Ge$ is called a {\it plasmonic eigenvalue} if the following problem admits
a solution $u$ in the space $H^1(\Rbb^3)$:
\beq\label{plasmon}
\begin{cases}
\GD u =0 \quad\quad\quad\quad\, &\text{in}\  {\Rbb}^3\backslash \p\GO ,\\
u|_{-}=u|_{+} \quad\quad\quad\ &\text{on}\ \p\GO ,\\
\Ge\p_n u|_{-}= -\p_n u|_{+} \quad\ \, &\text{on}\ \p\GO.
\end{cases}
\eeq
where the subscript $\pm$ on the left-hand side respectively denotes the limit (to $\p\GO$) from the outside and inside of $\GO$.
The well-known relation \cite{AKMU} between the plasmonic eigenvalue $\Ge$ and
the NP eigenvalue $\Gl$ gives
\beq\label{plasmonic eigenvalues}
\epsilon_j - 1 =\frac{-2\lambda_j}{\lambda_j-1/2}\sim 4 A_{\pm}j^{-1/2}.
\eeq
Hence the plasmonic eigenvalues constitute  the sequence with $1$ as the limit, and the (R.H.S.) of \eqnref{plasmonic eigenvalues} gives its converging rate. Positive and negative NP eigenvalues  correspond to left limits and right limits respectively.

\subsection{(In)finiteness of the set of negative eigenvalues}
We consider now the question on negative eigenvalues of the NP operator, mentioned in the Introduction. We give here the proof of Theorem \ref{finitenegative}.

So, we suppose here that the surface $\G$ is infinitely smooth and \emph{convex}. The latter  condition means that the principal symbol $\kb(\xb,\x)$ of the NP operator $\Kc$,  considered as a pseudodifferential operator calculated in Section \ref{symbol}, is positive for all $\xb\in \pO$ and $\x: |\x|=1$, therefore, by the compactness of the cospheric bundle,
\begin{equation}\label{negative symbol}
    \kb(\xb,\x)\ge C|\x|^{-1}
\end{equation}
Therefore, the principal symbol of the \emph{self-adjoint} pseudodifferential operator ${\Kp}=\Sc^{-\frac12}\Kc\Sc^{\frac12}$ is the same, $\kb(x,\x)$. It follows that the symbol
\begin{equation}\label{SqRoot}
    \lb(x,\x)=(\kb(\xb,\x))^{-\frac12}
\end{equation}
is well defined as a smooth positive function on the cotangent bundle of  $\dot{T}^*(\pO)$ as a positive function, degree $-\frac12$ positively homogeneous.

Now we consider \emph{some} pseudodifferential operator $\Lb$ on $\pO$ with principal symbol $\lb$. This operator can be constructed in the usual way, by means of gluing together local operators with this symbol, defined by means of the Fourier transform. We denote by $\Rc$ the operator $\Lb^*\Lb$, where the adjoint operator is considered in the sense of the space $H^{-\frac12}(\G)$ with norm as in \eqref{-1/2 norm}. The operator $\Rc$, thus constructed, is a nonnegative operator in $H^{-\frac12}$, moreover, it is an elliptic operator of order $1$. Therefore, the zero subspace of $\Rc$ has finite dimension. If this subspace is nontrivial, we add to $\Rc$ the orthogonal, finite rank, projection onto this subspace, thus changing $\Rc$ by a smoothing operator. After this operation, the operator, which we still denote by $\Rc$, becomes a first order \emph{positive} elliptic operator, still, with principal symbol $\lb(x,\x)^{2}=\kb(x,\x)^{-1}$. By construction, the operator $\Rc$ is invertible and the principal symbol of the positive operator $\Kb=\Rc^{-1}$ equals, again,  $\kb(x,\x)$. So, since the operators ${\Kp}$ and $\Kb$ have the same principal symbol, their difference, $\Mcc=\tilde{\Kc}-\Kb$ is a self-adjoint pseudodifferential operator of lower order, no greater than $-2$. Therefore, $\Mcc$ has the form
\begin{equation*}
    \Mcc=\Kb Z\Kb,
\end{equation*}
with an operator $Z=\Kb^{-1}\Mcc\Kb^{-1}$, a zero order pseudodifferential operator, bounded in all Sobolev spaces $H^s(\pO)$.

Now, consider, for some $t>0$ the subspace $\Lc_t$ spanned by all eigenfunctions of $\Kb$ with eigenvalues  smaller than $t$. This subspace has finite codimension, and for $u\in\Lc_t$, we have $\|\Kb u\|^2\le t(\Kb u,u)$ (by the Spectral Theorem.) Therefore, (all scalar products and norms are in the sense of $H^{-\frac12}(\G)$) $|(\Mcc u,u)|=|(\Kb Z \Kb u,u)|=|(Z\Kb u,\Kb u)|\le \|Z\| \|\Kb u\|^2, \, u\in \Lc_t.$

So we have
\begin{equation*}
  (\Kp u,u) = (\Kb u,u)+(\Mcc u,u)\le (\Kb u,u) -t\|Z\|(\Kb u,u)=(1-t\|Z\|)(\Kb u,u).
\end{equation*}
We choose $t$ so small that $1-t\|Z\|>\frac12$. This choice gives us
\begin{equation}\label{negativity}
   (\Kp u,u)\ge \frac12   (\Kb u,u)\ge 0.
\end{equation}
In this way, we have found a subspace with finite codimension on which the quadratic form of the operator $\Kp$ is nonnegative. Again, by the Spectral Theorem, this means that the operator $\Kp,$ and together with it, the operator $\Kc$, may have only a finite number of positive eigenvalues.

On the other hand, we consider a surface which is \emph{not} almost convex. This means that somewhere at the surface, the integrand in $A_-(\G)$ in \eqref{APM} is positive, thus $A_-(\G)>0.$ This means that for the negative eigenvalues there exist the power asymptotics with a nonvanishing coefficients, which implies their infiniteness. As explained in the Introduction, the set of positive eigenvalues is always infinite.

\subsection{M\"obius (non)invariance} As described in the previous section, the asymptotic behavior  of absolute values of NP eigenvalues is related closely with the Willmore energy and the  Euler characteristics. Some applications in this direction can be found in
\cite{Miyanishi:Weyl}. The separate behavior of positive and negative eigenvalues involves more detailed structures. For instance, the asymptotic of the absolute value of NP eigenvalues is invariant under M\"obius transforms as is the case with two-dimensional NP operators. However, M\"obius transforms of ellipsoids are gourd-shaped surfaces (they may have negative curvature points). So we have infinite many negative eigenvalues on the gourd-shaped surface and its asymptotic of negative eigenvalues varies from an ellipsoid. As a result, the asymptotics of NP eigenvalues may vary under M\"obius  transforms, while Weyl's asymptotics of moduli of  eigenvalues is invariant.

\subsection{Remarks}
 We should note that the results on the asymptotics of eigenvalues and on the negative eigenvalues are obtained under the condition of infinite smoothness of the surface while the singular numbers asymptotics is proved for surfaces of the class $C^{2,\a}$. We are convinced that these  results can be extended to less smooth surfaces, the ones where the asymptotic coefficients still make sense, thus to surfaces of the class $C^{1,1}$,  i.e., those which are described locally by once differentiable functions with derivatives satisfying the Lipschitz condition. The natural way to pursue this aim is to study the approximation of non-smooth surfaces by smooth ones, in the flavor of \cite{RT}.

General results on the eigenvalue asymptotics for a smooth surface can be easily extended to the multi-dimensional case, without considerable complications. What may cause certain trouble is finding an expression for the asymptotic coefficients in geometrical terms.  However, for the case of finite smoothness, the possibility of such extension is presently unclear, in particular, since the reasoning involving Hilbert-Schmidt operators should be replaced by some other Schatten classes and  presently known conditions for an integral operator to belong to such classes might be  not sufficient for our aims.

By our opinion, the results on the eigenvalue asymptotics can also be extended to the NP operator defined in a proper way with the fundamental solution for the Laplacian being  replaced by the fundamental solution for some general second order elliptic operator with smooth coefficients.

The authors mean to pursue these topics in the future.


\end{document}